\newcommand{\lra}{\longrightarrow}
\newcommand{\xra}{\xrightarrow}
\newcommand{\xla}{\xleftarrow}
\newcommand{\ov}{\overline}
\newcommand{\ges}{{\scriptscriptstyle\geqslant}}
\newcommand{\col}{\colon}
\newcommand{\wh}{\widehat}
\newcommand{\ul}{\underline}
\newcommand{\dd}{\partial}
\newcommand{\fm}{{\mathfrak m}}
\newcommand{\fn}{{\mathfrak n}}
\newcommand{\fq}{{\mathfrak q}}
\newcommand{\fp}{{\mathfrak p}}
\newcommand{\vf}{{\varphi}}
\newcommand{\bd}{\boldsymbol}
\newcommand{\bw}{{\mathsf\Lambda}}
\newcommand{\Sym}{{\mathsf S}}
\newcommand{\Gam}{{\mathsf\Gamma}}
\newcommand{\card}{\operatorname{card}}
\newcommand{\id}{\operatorname{id}}
\newcommand{\edim}{\operatorname{edim}}
\newcommand{\Ker}{\operatorname{Ker}}
\newcommand{\depth}{\operatorname{depth}}
\newcommand{\grade}{\operatorname{grade}}
\newcommand{\height}{\operatorname{height}}
\newcommand{\Ass}{\operatorname{Ass}}
\newcommand{\Ann}{\operatorname{Ann}}
\newcommand{\Spec}{\operatorname{Spec}}
\newcommand{\Supp}{\operatorname{Supp}}
\newcommand{\cidim}{\operatorname{CI-dim}}
\newcommand{\pd}{\operatorname{pd}}
\newcommand{\fd}{\operatorname{fd}}
\newcommand{\cmd}{\operatorname{cmd}}
\newcommand{\cha}[1]{\operatorname{char}(#1)}
\newcommand{\cid}{\operatorname{cid}}
\newcommand{\rank}{\operatorname{rank}}
\newcommand{\HH}[2]{\operatorname{H}_{#1}(#2)}
\newcommand{\Tor}[4]{\operatorname{Tor}_{#1}^{#2}(#3,#4)}
\newcommand{\Ext}[4]{\operatorname{Ext}^{#1}_{#2}(#3,#4)}
\newcommand{\AQ}[4]{\operatorname{D}_{#1}(#2|#3,#4)}
\newcommand{\Hom}[3]{\operatorname{Hom}_{#1}(#2,#3)}
\newcommand{\Poi}[3]{{P}_{#1}^{#2}(#3)}
\newcommand{\Hil}[2]{{H}_{#1}(#2)}
\newcommand{\cls}{\operatorname{cls}}
\newcommand{\shift}{{\mathsf\Sigma}}
\newcommand{\EC}[2]{\operatorname{E}_{#1}^{#2}}
\newcommand{\EE}[2]{\operatorname{E}^{#1}_{#2}}
\newcommand{\Ed}[2]{\operatorname{d}^{#1}_{#2}}
\theoremstyle{plain}
\newtheorem{theorem}{Theorem}[section]
\newtheorem{proposition}[theorem]{Proposition}
\newtheorem{lemma}[theorem]{Lemma}
\newtheorem{corollary}[theorem]{Corollary}
\newtheorem{bchunk}[theorem]{}
\theoremstyle{definition}
\newtheorem{construction}[theorem]{Construction}
\newtheorem{chunk}[theorem]{}
\theoremstyle{remark}
\newtheorem{example}[theorem]{Example}
\newtheorem{question}[theorem]{Question}
\newtheorem*{Claim}{Claim}
\newtheorem{conjecture}[theorem]{Conjecture}
\newtheorem*{Remark}{Remark}
\newtheorem{remark}[theorem]{Remark}
\numberwithin{equation}{theorem}
\begin{document}
\title[Quasi-complete intersection homomorphisms]
{Quasi-complete intersection homomorphisms}
\date{\today}

\author[L.~L.~Avramov]{Luchezar L.~Avramov}
\address{Luchezar L.~Avramov\\ Department of Mathematics\\
   University of Nebraska\\ Lincoln\\ NE 68588\\ U.S.A.}
     \email{avramov@math.unl.edu}

\author[I.~B.~Henriques]{In\^es Bonacho Dos Anjos Henriques}
\address{In\^es Bonacho Dos Anjos Henriques\\ Department of Mathematics\\
   University of Nebraska\\ Lincoln\\ NE 68588\\ U.S.A.}
\curraddr{Department of Pure Mathematics\\ University of Sheffield\\ Hicks Building\\
S3 7RH\\ U.K.}
    \email{i.henriques@sheffield.ac.uk}

\author[L.~M.~\c{S}ega]{Liana M.~\c{S}ega}
\address{Liana M.~\c{S}ega\\ Department of Mathematics and Statistics\\
   University of Missouri\\ \linebreak Kansas City\\ MO 64110\\ U.S.A.}
     \email{segal@umkc.edu}
     
\dedicatory{This paper is dedicated to the memory of Andrey Todorov}

\subjclass[2000]{13D02 (primary), 13A02, 13D07 (secondary)}
\keywords{Complete intersection ideals, Gorenstein ideals, Koszul homology,
complete intersection rings, Gorenstein rings, Cohen-Macaulay rings, Poincar\'e series.}

\thanks{Research partly supported by NSF grants DMS-0803082 and DMS-1103176
(LLA), NSF grant DMS-1101131 and Simons Foundation grant 20903 (LS)}

 \begin{abstract}
Extending a notion defined for surjective maps by Blanco, Majadas and Rodicio, 
we introduce and study a class of homomorphisms  of commutative noetherian rings, 
which strictly contains the class of locally complete intersection homomorphisms 
while sharing many of its remarkable properties.
 \end{abstract}

  \maketitle

\section*{Introduction}

This paper is concerned with those homomorphisms $\vf\col R\to S$ of commutative noetherian 
rings for which the homology functors $\AQ nSR-$ of Andr\'e \cite{AnBook} and Quillen \cite{Qu}
vanish for $n\ge3$.  We call them quasi-complete intersection (or q.c.i.)\ homomorphisms, in view 
of the characterization of locally complete intersection (or l.c.i.)\ homomorphisms by the condition 
$\AQ nSR-=0$ for $n\ge2$; see \cite{Qu, AnBook, Avr99}.  

Quillen \cite{Qu} conjectured that q.c.i.\ homomorphisms are the only maps that have 
finitely many non-vanishing Andr\'e-Quillen homology functors.  To devise a proof or 
to construct a counter-example, one needs to understand the structure of q.c.i.\ 
homomorphisms and/or to gain a detailed knowledge of their properties.

When $R$ is local J.~Majadas Soto and A.~Rodicio Garcia \cite{GS, SoJPAA, SoGMJ} 
proved that surjective q.c.i.\ homomorphisms are quasi-Gorenstein in the 
sense of \cite{AF:Gor}.  Rodicio conjectured that, up to faithfully flat base change, 
every q.c.i.\ homomorphism has a known form---it arises from a pair of 
embedded regular sequences; see \cite{SoJPAA}.

Here the goal is to systematically investigate the properties of general q.c.i.\ homomorphisms.  
Our results show that they are remarkably similar to those of l.c.i.\ maps, which form a much 
smaller class. Following a template devised in \cite{Avr99}--\cite{AFH} for studying
homomorphisms of noetherian rings, we proceed in three stages.  

The initial focus is on surjective homomorphisms of local rings.  For such maps we 
study the q.c.i.\ property by using a description in terms of the Koszul homology of 
$\Ker(\vf)$, due to Blanco, Majadas, and Rodicio \cite{BMR}.  Sections 
\ref{S:Quasi-complete intersection ideals} through \ref{S:Poincare series}
contain, among other things, short new proofs of the theorems of Garcia and Soto; 
formulas for the changes of depth and (with restrictions) of Krull dimension, 
which raise questions concerning modules of finite Gorenstein dimension; descriptions, 
in closed form, of the changes in the homological behavior of the residue fields;
examples of q.c.i.\ homomorphisms that do not have the form conjectured by Rodicio.  

At a second stage, the results on surjective maps of local rings are extended   
to arbitrary local homomorphisms.  This is done in Section \ref{S:Local homomorphisms 
of local rings} by utilizing Cohen factorizations of local homomorphisms, constructed in \cite{AFH}.

Finally, homomorphisms of noetherian rings are analyzed by patching the local results
through vanishing theorems for an appropriate (co)homology theory.  This is the content
of Section \ref{S:Homomorphisms of noetherian rings}, where the characterization
of q.c.i.\ homomorphisms in terms of Andr\'e-Quillen homology is used for the first time
in this paper.

\section{Quasi-complete intersection ideals}
  \label{S:Quasi-complete intersection ideals}

Throughout this section $(R,\fm,k)$ denotes a local ring; in detail: $R$ is a commutative 
noetherian ring with unique maximal ideal $\fm$, and $k=R/\fm$. In addition, $I$ is an 
ideal of $R$ and we set $S=R/I$.  We define quasi-complete intersection ideals, track 
their behavior under certain changes of rings, and provide examples.  

Let $\bd a$ be a finite generating set of $I$ and $E$ the Koszul complex 
on $\bd a$.  The homology $\HH{*}E$ has $\HH 0E=S$ and a structure
of graded-commutative $S$-algebra, inherited from the DG $R$-algebra $E$.
Thus, there is a unique homomorphism
  \begin{equation}
    \label{eq:lambda}
\lambda^S_*\colon \bw_*^S \HH 1E\lra\HH{*}E
  \end{equation}
of graded $S$-algebras with $\lambda_1^S=\id^{\HH1E}$, where $\bw_*^S$
is the exterior algebra functor.   

Note that $\lambda^S_*$ is bijective if (and only if) there exists \emph{some}
isomorphism of graded $S$-algebras $\lambda\col\HH{*}E\xra{\cong}\bw_*^S\HH 1E$.
Indeed, when this is the case the composed map
  \[
\bw_*^S \HH 1E\xra{\bw_*^S(\lambda_1)}\bw_*^S \HH 1E\xra{\,\lambda^{-1}\,}\HH{*}E
  \]
is an isomorphism, and it is equal to $\lambda^S_*$ because both maps restrict to $\id^{\HH1E}$.

  \begin{chunk}
    \label{def:qi}
We say that $I$ is \emph{quasi-complete intersection} if $\HH 1E$ is free over $S$ and 
$\lambda^S_*$ is bijective; this property does not depend on the choice of $\bd a$, see 
\cite[1.6.21]{BH}.  

To the best of our knowledge, the ideals defined above first appeared, with no denomination,
in Rodicio's paper \cite{RoCMH}; in his joint paper \cite{BMR0} with Blanco and Majadas, 
their defining property is called \emph{free exterior Koszul homology}.
  \end{chunk}

Recall that $\grade_RS$ denotes the maximal length of an $R$-regular 
sequence in $I$, and that this number is equal to the least integer $n$ with 
$\Ext nRSR\ne0$.  

  \begin{lemma}
    \label{lem:principal}
If $I$ is quasi-complete intersection, then 
  \begin{align}
  \label{eq:grade1}
\grade_RS&=\rank_R E_1-\rank_S\HH1E\,.
 \end{align}
  \end{lemma}

\begin{proof}
The grade-sensitivity of $E$, see \cite[1.6.17(b)]{BH}, yields 
  \[
\rank_R E_1 - \grade_RS =\max\{n\mid\HH nE\ne0\}\,.
  \]
As $\HH{*}E$ is isomorphic to $\bw_*^S\HH1E$, the right-hand side equals $\rank_S\HH1E$.
 \end{proof}

Quasi-complete intersection ideals are stable under certain base changes.

  \begin{lemma}
    \label{lem:qciflat}
Let $R'$ be a local ring and $\rho\colon R\to R'$ a flat homomorphism of rings. 

When $IR'\ne R'$ holds, if $I$ is quasi-complete intersection, then so is $IR'$.  

When $\rho$ is faithfully flat, if $IR'$ is quasi-complete intersection, 
then so is $I$.
  \end{lemma}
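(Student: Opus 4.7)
The plan is to reduce both statements to naturality of the Koszul complex and of $\lambda$ under base change. Fix a finite generating set $\bd a$ of $I$ with associated Koszul complex $E$, and let $\bd a'$ be the tuple obtained by applying $\rho$; it generates $IR'$, and the Koszul complex $E'$ on $\bd a'$ over $R'$ is $E\otimes_R R'$. Writing $S' = R'/IR' = S\otimes_R R'$ and using the flatness of $\rho$, one obtains $\HH i{E'}\cong \HH iE\otimes_S S'$ for every $i$. Since exterior algebras commute with arbitrary base change, $\bw_*^{S'}(M\otimes_S S')\cong(\bw_*^S M)\otimes_S S'$ for any $S$-module $M$; applied to $M=\HH 1E$ and combined with naturality, this identifies $\lambda^{S'}_*$ with $\lambda^S_*\otimes_S S'$.

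The first assertion is now immediate. If $\HH 1E$ is free over $S$, then $\HH 1{E'}\cong \HH 1E\otimes_S S'$ is free over $S'$; and if $\lambda^S_*$ is bijective, then so is $\lambda^S_*\otimes_S S'=\lambda^{S'}_*$. Thus $IR'$ is quasi-complete intersection.

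For the converse, I would invoke faithfully flat descent. When $\rho$ is faithfully flat, so is its base change $S\to S'$. Assuming $IR'$ is quasi-complete intersection, the $S'$-module $\HH 1E\otimes_S S'=\HH 1{E'}$ is free, hence flat; by faithfully flat descent, $\HH 1E$ is flat over $S$, and since it is finitely generated over the noetherian local ring $S$ it is in fact free. In the same way, bijectivity of $\lambda^{S'}_* = \lambda^S_*\otimes_S S'$ descends to bijectivity of $\lambda^S_*$, so $I$ is quasi-complete intersection.

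No step here is genuinely deep: the point requiring care is the naturality identification $\lambda^{S'}_* = \lambda^S_*\otimes_S S'$, which is a formal consequence of the universal property defining $\lambda$ together with the functoriality of the Koszul complex and of the exterior algebra under flat extensions. The remaining ingredients — finitely generated flat modules over noetherian local rings are free, and faithfully flat descent of freeness and of isomorphisms — are standard.
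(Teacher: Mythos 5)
Your argument is correct and takes essentially the same route as the paper: identify $R'\otimes_RE$ with the Koszul complex on $\rho(\bd a)$, use flatness of $\rho$ to obtain $\HH{*}{R'\otimes_RE}\cong R'\otimes_R\HH{*}{E}$ as graded algebras, and then conclude by base change in one direction and faithfully flat descent of freeness and of the bijectivity of $\lambda$ in the other. The paper compresses these last steps into ``standard arguments,'' which you have spelled out correctly, including the naturality identification of $\lambda^{S'}_*$ with $\lambda^S_*\otimes_SS'$ and the descent of freeness via flatness of a finite module over the noetherian local ring $S$.
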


  \begin{proof}
Note that $R'\otimes_RE$ 
is the Koszul complex on the generating set $\rho(\bd a)$ of $IR'$.
As $R'$ is a flat $R$-module, there is a natural isomorphism
$\HH *{R'\otimes_RE}\cong R'\otimes_R\HH *{E}$ of graded 
algebras, whence the assertions follow by standard arguments.
  \end{proof}
  
  \begin{lemma}
    \label{lem:step}
Let $\bd b$ be an $R$-regular sequence contained in $I$.

The ideal $I$ of $R$ is quasi-complete intersection if and only if the ideal 
$\ov I=I/(\bd b)$ of $\ov R=R/(\bd b)$ is quasi-complete intersection.
  \end{lemma}

\begin{proof}
By induction, we may assume ${\bd b}=b$.  Let $E$ be the Koszul complex
on a generating set $\{a_1,\dots,a_c\}$ of $I$ with $a_1=b$.  Pick
$v_1,\dots,v_c$ in $E_1$ with $\dd(v_i)=a_i$ for $i=1,\dots,c$, set $v=v_1$, and 
note that $J:=(b,v)E$ is a DG ideal.  Each $y\in J$ can be written  as 
$y=be+vf$ with $e,f\in E'$, where $E'$ is the DG $R$-subalgebra of $E$, 
generated by $v_2,\dots,v_c$.  Now $\dd(y)=0$ implies $b(\dd(e)+f)=0$.  
As $b$ is regular, this gives $f=-\dd(e)$, hence $y=\dd(ve)$.  We proved $\HH{*}J=0$,
so $E\to E/J$ induces $\HH{*}E\cong\HH{*}{E/J}$ as graded $S$-algebras.  
It remains to note that $E/J$ is isomorphic to the Koszul complex on ${\bd a}=
\{a_2+(\bd b),\dots,a_c+(\bd b)\}\subseteq\ov R$, and $\ov I=({\bd a})$.
 \end{proof}

In our study of quasi-complete intersection ideals a fundamental role is played
by a classical construction of Tate \cite{Ta}; see also \cite[\S1.1]{GL}, \cite[\S6]{Avr98}.

\begin{construction}
 \label{con:tate}
Let $\bd a=\{a_1,\dots,a_c\}$ be a generating set of $I$ and $E$ the 
Koszul complex on $\bd a$.  Fix a basis $\bd v=\{v_1,\dots,v_c\}$ of $E_1$ 
with $\dd(v_i)=a_i$ for $1\le i\le c$ and a set of cycles $\bd z=\{z_1,\dots,z_h\}$ 
whose classes minimally generate $\HH1E$.  

Let $W$ be a graded $R$-module that has a basis $\bd w=\{w_1,\dots,w_h\}$ of
elements of degree $2$, and let $\Gam^R_*W$ be the divided powers
algebra on $W$.  The products $w_1^{(j_1)}\cdots w_h^{(j_h)}$ with $j_i\ge0$
and $j_1+\cdots+j_h=p$ form an $R$-basis of $\Gam^R_pW$.

The \emph{Tate complex} on $E$ and $\bd z$ is the complex $F$ of free
$R$-modules with
  \begin{alignat}{2}
    \label{eq:tatec}
F_n&=&\bigoplus_{2p+q=n}\Gam^R_{p}W&\otimes_RE_q\,.
   \\
    \label{eq:tated}
\dd^F_n(w_1^{(j_1)}\cdots w_h^{(j_h)}\otimes e) &=&\ w_1^{(j_1)}\cdots
w_h^{(j_h)}&\otimes\dd^E(e) \\
  \notag
&&+\sum_{i=1}^h w_1^{(j_1)}&\cdots w_i^{(j_i-1)}\cdots w_h^{(j_h)}\otimes
z_{i}e\,.
  \end{alignat}

It is clear from the construction that $\HH0F=S$ holds.  
  \end{construction}

  \begin{chunk}
    \label{ch:tate}
Let $F$ be the complex in Construction \ref{con:tate}.  From \cite[Theorems 4]{Ta} one gets:

If $I$ is quasi-complete intersection, then $F$ is a \emph{free resolution} of the 
$R$-module $S$.  If $\bd a$ generates $I$ minimally, then $F$ is \emph{minimal}: each 
$z_i$ then is a syzygy in the free cover $E_1\to I$, whence $z_i\in\fm E_1$, so 
$\dd(F)\subseteq\fm F$ holds by \eqref{eq:tated}.
  \end{chunk}

We finish the section with descriptions of quasi-complete intersection ideals in terms 
of the homology functors $\AQ nSR-$ of Andr\'e \cite{AnBook} and Quillen \cite{Qu}.  Until
the last section, these are used solely for comparisons with existing results or arguments. 
 
\begin{chunk}
     \label{ch:aq}
The ideal $I$ is quasi-complete intersection\ if and only if $\AQ nSR-=0$ holds for $n\ge3$, 
if and only if $\AQ nSRk=0$ holds for $n\ge3$.

The first equivalence is due to Blanco, Majadas, and Rodicio and appears in \cite[Corollary~3$'$]{BMR};
the proofs of similar statements in \cite{RoCMH} and \cite{BMR0} are incomplete, see 
\cite[pp.\ 126--127]{BMR}.  The second equivalence follows from \cite[4.57]{AnBook}.
  \end{chunk}

\section{Between complete intersection and Gorenstein}
  \label{S:Between complete intersection and Gorenstein}

In this section $I$ is an ideal in a local ring $(R,\fm,k)$.  We compare the 
quasi-complete intersection property and other properties of ideals, which we recall next. 

  \begin{chunk}
    \label{ex:ci}
Recall that $I$ is said to be \emph{complete intersection} if it has a
generating set $\bd a$ satisfying the following equivalent conditions: (i) $\bd a$ is
an $R$-regular sequence; (ii) $\HH1E=0$; (iii) $\HH nE=0$ for all $n\ge1$.
Thus, one evidently has:

Every complete intersection ideal is quasi-complete intersection.

The ideal $\fm$ is complete intersection if and only if $R$ 
is regular; see \cite[2.2.5]{BH}.
  \end{chunk}

 \begin{chunk}
    \label{ch:ciRing}
By Cohen's Structure Theorem, the $\fm$-adic-completion of $R$ 
has a presentation $\wh R\cong Q/J$, with $Q$ a regular 
local ring.  The ring $R$ is said to be \emph{complete intersection} if in 
some Cohen presentation of $\wh R$ the ideal $J$ is generated
by a regular sequence; this property is independent of the presentation, 
see \cite[19.3.2]{Gr} or \cite[2.3.3]{BH}.  

The ideal $\fm$ is quasi-complete intersection if and only if the ring $R$ 
is complete intersection;  this is due to Assmus, \cite[2.7]{As}, see 
also \cite[2.3.11]{BH}. 
  \end{chunk}

  \begin{chunk}
    \label{ch:cidim}
A \emph{quasi-deformation} is a pair $R\to R'\gets Q$ of homomorphisms
of local rings, where $R\to R'$ is faithfully flat and $R'\gets Q$ is surjective with 
kernel generated by a $Q$-regular sequence.  By definition, the \emph{CI-dimension}
of an $R$-module $M$, denoted $\cidim_RM$, is finite if $\pd_Q(R'\otimes_RM)$ 
is finite for some quasi-deformation; see~\cite{AGP2}.

If $R$ is complete intersection, then $\cidim_RM$ is finite for each $M$.  

If $\cidim_R\fm$ is finite, then $R$ is complete intersection; see 
\cite[1.3, 1.9]{AGP2}.
  \end{chunk}

  \begin{chunk}
    \label{ch:gorRing}
We say that $I$ is \emph{quasi-Gorenstein} if $\Ext{\grade_RS}RSR\cong S$ 
and $\Ext nRSR=0$ for $n> \grade_RS$; thus, $\fm$ is quasi-Gorensten if 
and only if $R$ is a Gorenstein ring.

By \cite[2.3]{AI}, the ideal $I$ is Gorenstein if and only if 
the homomorphism $R\to R/I$ is quasi-Gorenstein in the sense of 
\cite{AF:qGor}.  In particular, \cite[6.5, 7.4, 7.5]{AF:qGor} yield
  \begin{align}
  \label{eq:cmd1}
\grade_RS&=\depth R-\depth S\,.
  \end{align}
A short proof of this equality, following \cite[Corollary\,5]{GS}, proceeds as follows:

Set $d=\depth S$ and $g=\grade_RS$.  As $\Ext gR{S}R\cong S$, the spectral sequence 
  \[
\EC 2{p,q}=\Ext p{S}k{\Ext qR{S}R}\implies \Ext {p+q}RkR
  \]
yields $\Ext{p}{R}{k}{R}=0$ for $p<d+g$, and $\Ext{d+g}RkR\cong\Ext d{S}{k}{S}\ne0$.
  \end{chunk}

We present new, direct proofs of two key results of Garcia and Soto,
comparing the quasi-complete intersection condition with other properties:  
The implication (3) in the next theorem is \cite[Proposition\,12]{SoJPAA} 
and (4) is obtained in \cite[Remark\,8]{GS}.

\begin{theorem}
  \label{thm:hierarchy}
For an ideal $I$ of a local ring $R$, the ring $S=R/I$, and the Koszul complex
$E$ on some set of generators for $I$ the following implications hold
 \[
\xymatrixrowsep{.1pc}
\xymatrixcolsep{.8pc}
\xymatrix{
\text{$I$ is complete intersection }
\ar@{<=>}[r]^-{\textstyle(1)}
&
\text{\begin{tabular}{c}
$I$ is quasi-complete intersection\\
and $\pd_RS$ is finite
\end{tabular}}
\ar@{=>}[ddd]_-{\textstyle(2)\ }
  \\
  \\
  \\
\text{$I$ is quasi-complete intersection }
\ar@{=>}[ddd]_-{\textstyle(4)\ }
&
\text{\begin{tabular}{c}
$\HH1E$ is free over $S$\\
and $\cidim_RS$ is finite
\end{tabular}}
\ar@{=>}[l]_-{\textstyle(3)}
  \\
  \\
  \\
\text{$I$ is quasi-Gorenstein}
  }
   \]
 \end{theorem}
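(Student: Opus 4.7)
The plan is to prove the four implications in turn. The forward direction of (1) and implication (2) are immediate from the definitions: when $I$ is complete intersection with generating regular sequence $\bd a$, one has $\HH 1E=0$ (trivially $S$-free, with $\lambda^S_*$ the tautological isomorphism $S\to S$), the Koszul complex $E$ resolves $S$ freely so $\pd_RI<\infty$, and consequently $\cidim_RI\le\pd_RI<\infty$. The substantive work lies in the backward direction of (1), and in (3) and (4).

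For (1) backward, I would appeal to Tate's DG algebra resolution. Under the q.c.i.\ hypothesis $\HH *E\cong\bw_S^*\HH 1E$ with $\HH 1E$ free of rank $r$, lifting a minimal $S$-basis of $\HH 1E$ to cycles $z_1,\dots,z_r\in E_1$ and adjoining divided-power variables $T_1,\dots,T_r$ of degree $2$ with $\dd T_j=z_j$ produces a minimal free DG resolution of $S$ over $R$. Its Poincar\'e series takes the form
\[
\Poi{S}{R}{t}=\frac{(1+t)^c}{(1-t^2)^r}
\]
with $c=\mu(I)$. Finiteness of $\pd_RI$, equivalently of $\pd_RS$, forces this series to be a polynomial; since $(1-t^2)^r$ contributes a factor $(1-t)^r$ that is unmatched in the numerator, this happens only when $r=0$. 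Hence $\HH 1E=0$ and $I$ is complete intersection.

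For (3), my plan is to reduce to the case of finite projective dimension by means of a quasi-deformation. Given $\cidim_RI<\infty$, fix $R\to R'\gets Q$ with $\pd_Q(R'\otimes_RI)<\infty$. By the faithfully flat case of Lemma \ref{lem:qciflat}, it suffices to show $IR'$ is q.c.i.\ in $R'$; lifting $IR'$ to an ideal $\wt I\subseteq Q$ containing the regular sequence generating $\Ker(Q\to R')$, Lemma \ref{lem:step} further reduces the goal to showing $\wt I$ is q.c.i.\ in $Q$. Both hypotheses transport: $\pd_Q\wt I<\infty$ follows from the long exact sequence associated to $0\to\Ker(Q\to R')\to\wt I\to IR'\to0$, and the graded-$S$-algebra identification of Koszul homologies implicit in the proof of Lemma \ref{lem:step} carries freeness of the Koszul $H_1$ from $I$ to $\wt I$. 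The residual core claim is that an ideal $J$ of a local ring with $\pd J<\infty$ and Koszul $H_1$ free over the quotient is automatically quasi-complete intersection; this is what I expect to be the main obstacle. The natural route is again a Tate-type construction, adjoining degree-$2$ divided-power variables that kill $H_1$ and then using the strong polynomial-growth constraints that finite projective dimension imposes on the ranks of the resulting DG algebra, together with the graded-commutative algebra structure on the full Koszul homology, to exclude any higher-degree generators.

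For (4), the plan is to exploit the Poincar\'e duality of the exterior algebra. The q.c.i.\ hypothesis gives $\HH *E\cong\bw^*_SF$ with $F$ free of rank $r=\rank_S\HH 1E$, so $\HH rE\cong S$ and multiplication induces perfect $S$-pairings $\HH iE\otimes_S\HH{r-i}E\to\HH rE\cong S$. Setting $g=\grade_RS$ and picking a maximal $R$-regular sequence $\bd b\subseteq I$ of length $g$, I would pass to $\ov R=R/(\bd b)$ and exploit the standard computation of $\Ext *RSR$ via a Hom-complex built from the Koszul complex on the remaining generators of $\ov I\subset\ov R$. The perfect pairings on $\HH *E$ then translate into $\Ext gRSR\cong S$ and $\Ext nRSR=0$ for $n>g$, which by \ref{ch:gorRing} is precisely the quasi-Gorenstein condition. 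The essential difficulty is the passage from the internal multiplicative duality on $\HH *E$ to the external Ext calculation; carrying this out explicitly via a DG resolution of $S$ should yield a proof that avoids the general Poincar\'e-duality-algebra framework used by Garcia--Soto in \cite{GS}.
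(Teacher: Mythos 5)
Your treatment of (1) and (2) is correct and matches the paper: for the reverse direction of (1) the paper also uses the Tate complex of Construction \ref{con:tate}, noting that it is a \emph{minimal} resolution when the generators and the cycles are chosen minimally, so finite projective dimension forces $W=0$ and hence $\HH1E=0$; your Poincar\'e-series formulation is the same argument. The problems are in (3) and (4): in both cases the step you yourself flag as the ``main obstacle'' or ``essential difficulty'' is the entire mathematical content of the implication, and what you sketch in its place would not close it.

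In (3), after your (correct) reduction along a quasi-deformation, the residual claim is that an ideal $J$ of a local ring $Q$ with $\pd_QJ$ finite and with first Koszul homology free over $Q/J$ is quasi-complete intersection. This is exactly where the paper invokes a theorem of Gulliksen \cite[1.4.9]{GL}, which under these hypotheses gives the stronger conclusion that $J$ is generated by a regular sequence; you neither cite nor prove such a result. Your proposed substitute---adjoin degree-two divided-power variables killing the first Koszul homology and appeal to ``polynomial-growth constraints'' from finite projective dimension---is not a proof: acyclicity of the Tate complex is only known under the full quasi-complete intersection hypothesis (\cite[Thm.~2]{Ta}), which is what you are trying to establish, and finite projective dimension of $Q/J$ does not by itself control the higher-degree variables an acyclic closure may require. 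In (4), the proposed mechanism is flawed: a Hom-complex built from the Koszul complex on the remaining generators of $\ov I\subseteq\ov R$ does not compute $\Ext{n}{\ov R}{S}{\ov R}$, because the Koszul complex resolves $S$ only when the ideal is complete intersection; in fact $\Hom{R}{E}{R}\cong\shift^{-c}E$, so its homology is again Koszul homology, not Ext. To get $\Hom{\ov R}{S}{\ov R}\cong S$ and the vanishing of higher Ext one must resolve $S$, i.e.\ use the infinite Tate resolution $F$, and transferring the Poincar\'e duality of $\bw_*^S\HH1E$ through $F$ is precisely the hard step: the paper does it by dualizing $F$, filtering $F^{\vee}$, and identifying the $E_1$-page of the resulting spectral sequence---via the self-dualities $E\cong\shift^{-c}E^{\vee}$ and $\bw_*^RU\cong\shift^h(\bw_*^RU)^{\vee}$---with the dual of an auxiliary acyclic Tate complex, whence $\Ext{n}{R}{S}{R}=0$ for $n\ne c-h$ and $\Ext{c-h}{R}{S}{R}\cong S$. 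Your proposal acknowledges this passage but does not supply it, so both (3) and (4) remain unproved as written.
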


\begin{remark} 
The simple implications in the theorem are irreversible:
For (2), apply \ref{ex:ci} and \ref{ch:ciRing} with $R=k[x]/(x^2)$ and $I=\fm$.  For (4), apply
\ref{ch:ciRing} and \ref{ch:gorRing} with $R=k[x,y,z]/(x^2-y^2,y^2-z^2,xy,xz,yz)$ and $I=\fm$.  
For (3), see Theorem~\ref{thm:agp}.
   \end{remark}

To prove (3) we use a lemma that can be extracted from the argument for \cite[Proposition\,12]{SoJPAA},
which uses Andr\'e-Quillen homology.  Here is a short direct proof.

\begin{lemma}
  \label{lem:hierarchy}
(Notation as in Theorem \emph{\ref{thm:hierarchy}})  When $\cidim_RS$ is finite and 
$\HH1E$ is free over $S$ there exist a local ring $Q$, complete intersection ideals 
$J\subseteq I'$ of $Q$, and a flat local homomorphism $R\to Q/J$ such that $I(Q/J)=I'/J$.  
  \end{lemma}

   \begin{proof}
Let $R\to R'\gets Q$ be a quasi-deformation with $\pd_Q(R'\otimes_RS)$ finite,
set $S'=R'\otimes_RS$ and $I'=\Ker(Q\to R')$.  Let $E'$ be the Koszul complex on 
a subset $\bd a'$ of $Q$ with $R'\bd{a}'=I'$ and $B$ the Koszul complex on a 
$Q$-regular set $\bd b$ generating $I'$.  The
quasi-isomorphism $B\to R'$ induces a quasi-isomorphism $B\otimes_QE'
\to R'\otimes_QE'$.  It 
gives the first isomorphism in the following string, where the second one is induced by 
$R'\otimes_QE'\cong R'\otimes_RE$ and the third one is due to the flatness of $R'$ over $R$:
  \[
\HH1{B\otimes_QE'}\cong\HH1{R'\otimes_QE'}\cong\HH1{R'\otimes_RE}\cong
R'\otimes_R\HH1{E} \cong S'\otimes_S\HH1{E}
  \]
Thus, $\HH1{B\otimes_QE'}$ is free over $S'$.  As $B\otimes_QE'$ is the Koszul complex on 
$\bd b\sqcup\bd a'$ and this set generates $J$, Gulliksen \cite[1.4.9]{GL} shows that $J$ is 
complete intersection.
  \end{proof}

In \cite{GS} the implication (4) in Theorem \ref{thm:hierarchy} is derived from a result 
concerning algebras with Poincar\'e duality.  Our proof uses the DG $E$-module 
structure of Tate complexes.  We describe some notions used to handle it.

  \begin{chunk}
    \label{ch:DGE}
Let $X$ be a complex of $R$-modules.  We let $|x|=n$ stand for $x\in\ X_n$.   

The complex $\shift X$ has $(\shift X)_n=X_{n-1}$ and $\dd^{\shift X}_n=-\dd^X_{n-1}$;
for $n\in\mathbb Z$;  for $x\in X_{n-1}$ we let $\varsigma(x)$ denote the element 
$x\in(\shift X)_n$.  When $X$ is a DG $E$-module, so is $\shift X$, with action of $E$ given by 
$e\varsigma(x)=(-1)^{i}\varsigma(ex)$ for $e\in E_i$.

The complex $X^{\vee}$ has $(X^{\vee})_n=\Hom R{X_{-n}}R$ and
$(\dd^{X^{\vee}}(\chi))(x)=(-1)^{n}(\dd^{X}(x))$ for $\chi\in(X^{\vee})_n$, $x\in X_{-n-1}$,
and $n\in\mathbb Z$.  When $X$ is a DG $E$-module, so is $X^{\vee}$, with action of $E$
given by $(e\cdot\chi)(x)=(-1)^{|i||\chi|}\chi(ex)$ for $e\in E_i$.
  \end{chunk}

\stepcounter{theorem}

 \begin{proof}[Proof of Theorem \emph{\ref{thm:hierarchy}}]
(1)  If $I$ is quasi-complete intersection and $\pd_RS$ is finite,
then $F_n=0$ for $n\gg0$ in the minimal resolution $F$ of 
\ref{ch:tate}.  This forces $W=0$, hence $\HH1E=0$,
so $I$ is complete intersection.  The converse is clear.

(2) This implication holds because finite projective dimension implies 
finite CI-dimension, as testified by the quasi-deformation 
$R\xra{\,=\,}R\xla{\,=\,}R$.

(3)  This follows directly from Lemmas \ref{lem:hierarchy} and \ref{lem:qciflat}.

(4) We use the notation from Construction \ref{con:tate}.  Also, when $X$ is
a complex of $R$-modules, $\shift X$ denotes its translation.  For elements
of $X$, we let $|x|=n$ stand for $x\in\ X_n$.  Let $X^{\vee}$ denote
the complex with $(X^{\vee})_n=\Hom R{X_{-n}}R$ for $n\in\mathbb Z$, and
$(\dd^{X^{\vee}}(\chi))(x) =(-1)^{|\chi|}(\dd^{X}(x))$ for $\chi\in X^{\vee}$ and
$x\in X$.  When $X$ is a DG $E$-module, so is $X^{\vee}$, with action of $E$
given by $(e\cdot\chi)(x)=(-1)^{|e||\chi|}\chi(ex)$ for $e\in E$.

Let  $\bd\omega=\{\omega_1,\dots,\omega_h\}$ be the basis of $W^{\vee}$, dual to 
$\bd w$, and $\Sym^R_*(W^{\vee})$ the symmetric algebra of $W^{\vee}$.  Standard 
isomorphisms of $R$-modules, $(\Gam^R_pW)^{\vee}\cong\Sym^R_p(W^{\vee})$,
take the elements of the basis of $(\Gam^R_pW)^{\vee}$, dual to the basis of
$\Gam^R_pW$ described in Construction \ref{con:tate}, to the corresponding
products $\omega_1^{j_1}\cdots \omega_h^{j_h}$.  Thus, we have
  \begin{align}
(F^{\vee})_n&=\bigoplus_{2p+q=n} \Sym^R_{-p}(W^{\vee})\otimes_R(E^{\vee})_q\,.
  \\
    \label{eq:tateDuals}
\dd^{F^{\vee}}(\omega\otimes\epsilon)
&=\omega\otimes\dd^{E^{\vee}}(\epsilon)+\sum_{i=1}^h\omega_i\omega\otimes z_i\cdot\epsilon\,.
  \end{align}

A decreasing filtration of $F^{\vee}$ is given by the $R$-linear spans of the sets
  \[
\{(\omega_1^{j_1}\cdots \omega_h^{j_h}\otimes\epsilon)\in F^{\vee}\mid j_i\ge0\,,
j_1+\cdots+j_h\ge-p\,, \epsilon\in E^{\vee}\}
  \]
for $p\le0$. The resulting spectral sequence $\EE r{p,q}{\implies}\HH{p+q}{F^{\vee}}$ 
starts with
  \begin{equation}
    \label{eq:E0}
\EE 0{p,q}=\Sym^R_{-p}(W^{\vee})\otimes_R(E^{\vee})_{q-p}
  \quad\text{and}\quad
\Ed0{p,q}=\Sym^R_{-p}(W^{\vee})\otimes_R\dd^{E^{\vee}}_{q-p}\,.
  \end{equation}

Let $U$ be an $R$-module with basis $\bd u=\{u_1,\dots,u_h\}$.   The
hypothesis on $I$ yields an isomorphism $S\otimes_R\bw_*^RU\cong\HH{*}E$
of graded $R$-algebras sending $u_i$ to the class of $z_i$ for $i=1,\dots,h$. 
Let $\tau^{\bd u}\in(\bw_*^RU)^{\vee}_{-h}$ be the $R$-linear map with
$\tau^{\bd u}(u_1\cdots u_h)=1$; the map $u\mapsto u\cdot\tau^{\bd
u}$ is an isomorphism $\bw_*^RU\cong\shift^h(\bw_*^RU)^{\vee}$ of graded
$\bw_*^RU$-modules.  Similarly, the $R$-linear map $\tau^{\bd v}\in
(E^{\vee})_{-c}$, defined by $\tau^{\bd v}(v_1\cdots v_c)=1$, yields an
isomorphism  of DG $E$-modules $E\cong\shift^{-c}(E^{\vee})$, given by
$e\mapsto e\cdot\tau^{\bd v}$.  Thus, we get
  \begin{equation}
    \label{eq:isoss}
\HH{q-p}{E^{\vee}}
\cong\HH{q-p+c}E
\cong S\otimes_R\bw^R_{q-p+c}U
\cong S\otimes_R(\bw_*^RU)^{\vee}_{q-p+c-h}
  \end{equation}
as $R$-modules.  Now \eqref{eq:E0} and \eqref{eq:isoss} yield
   \begin{align*}
 \EE1{p,q}=\HH{q}{\EE0{p,{\scriptscriptstyle\bullet}}}
 &\cong\Sym^R_{-p}(W^{\vee})\otimes_R\HH{q-p}{E^{\vee}}
   \\
 &\cong S\otimes_R(\Sym^R_{-p}(W^{\vee})\otimes_R(\bw_*^RU)^{\vee}_{q-p+c-h})
   \\
 &\cong S\otimes_R(\Gam^R_{-p}(W)\otimes_R \bw^R_{p-q+(h-c)}U)^{\vee}\,.
  \end{align*}

Let $C$ be the complex with $C_n=\bigoplus_{2p+q=n}\EE1{p,q}$
and $\dd^C_n=\bigoplus_{2p+q=n}\Ed1{p,q}$.  Let $G$ be the Tate
complex on the Koszul complex $\bw_*^RU$ with zero differentials
and the set of cycles $\bd u$.  Formulas \eqref{eq:tateDuals} and
\eqref{eq:tated} show that the maps above produce an isomorphism of
complexes $C\cong S\otimes_R(\shift^{c-h}G)^{\vee}$.  By \ref{ch:tate}, $G$
is an $R$-free resolution of $R$, so $G$ is homotopically equivalent
to $R$.  Consequently, $(\shift^{c-h}G)^{\vee}$ is homotopically equivalent to
$\shift^{h-c}R$, whence $\HH nC=0$ for $n\ne h-c$, and $\HH{h-c}C\cong S$.
We have $C_{h-c}=\EE1{0,h-c}$ so the computation of $\HH{*}C$ yields
  \[
\EE2{p,q}=\HH{p}{\EE1{{\scriptscriptstyle\bullet},q}}\cong
\begin{cases}
 S &\text{for }(p,q)=(0,h-c)\,,\\
 0 &\text{otherwise}\,.
\end{cases}
  \]

The convergence of the spectral sequence implies $\HH{h-c}{F^{\vee}}\cong S$ and
$\HH{n}{F^{\vee}}=0$ for $n\neq h-c$.  One has $\HH{n}{F^{\vee}}\cong\Ext{-n}RSR$,
see \ref{ch:tate}, hence $\Ext{c-h}RSR\cong S$ and $\Ext
nRSR=0$ for $n\ne c-h$, so $I$ is quasi-Gorenstein.
  \end{proof}

\section{Exact ideals}
  \label{S:Exact ideals}

In this section $(R,\fm,k)$ denotes a local ring.  We first discuss principal quasi-complete 
intersection ideals.  They are easy to identify, are amenable to explicit computations, and 
have turned up in abundance in recent studies.  Exact ideals are defined in terms of 
principal quasi-complete intersection ideals by analogy with the way that complete 
intersection ideals are formed from regular elements.

  \begin{chunk}
    \label{rem:ezdqci}
Let $N$ be an $R$-module.  We say that an element $a$ of $R$ is an 
\textit{exact zero-divisor} on $N$ if there exists an element $b\in R$, such that 
satisfies
  \begin{equation}
    \label{eq:ezd}
N \ne (0:a)_N\cong aN\ne 0\,.
 \end{equation}
or, equivalently, $aN=(0:b)_N\ne0$ and $bN=(0:a)_N\ne0$.  This shows that $b$ 
also is an exact zero-divisor on $N$ and that the following sequence is exact:
  \begin{equation}
    \label{eq:periodic}
\cdots \to N\xrightarrow{b}N\xrightarrow{a}N\xrightarrow{b}N\xrightarrow{a}N\to\cdots
 \end{equation}
Thus, $(a,b)$ is an \emph{$N$-exact pair} in the sense of Kie{\l}pi\'nski, 
Simson, and Tyc \cite[1.1]{KST}.  

Following \cite[1.2]{HS}, when $N=R$ we speak simply of exact zero-divisors. 

We say that an element $a$ of $R$ is \emph{exact} if it is $R$-regular or an exact 
zero-divisor.  This holds if and only if $(a)$ is quasi-complete intersection:
The Koszul complex $E$ on $\{a\}$ has $E_{\ges2}=0$ and $\HH1E=(0:a)_R$, 
so $\HH1E$ is free over $R/(a)$ and $\lambda^{R/(a)}_*$ in \eqref{eq:lambda} is 
bijective if and only if $(0:a)_R=0$ or $(0:a)_R\cong {R/(a)}$.  
   \end{chunk}
 
In some cases, only half of the conditions in \eqref{eq:ezd} need verification. 

\begin{lemma}
 \label{lem:exact_cm}
Let $N$ be an $R$-module of finite length.

If $(0:b)_N=aN$ holds for some elements $a,b\in R$, then one has
$(0:a)_N=bN$.
 \end{lemma}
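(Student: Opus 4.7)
The plan is to prove equality of the two submodules $bN$ and $(0:a)_N$ of $N$ by first showing one inclusion directly from the hypothesis, and then showing that both submodules have the same (finite) length.

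First I would record the easy inclusion $bN \subseteq (0:a)_N$. Given $y \in N$, the hypothesis $aN = (0:b)_N$ gives $b(ay)=0$, hence $a(by)=0$, so $by \in (0:a)_N$. This step uses only commutativity and the inclusion $aN \subseteq (0:b)_N$ half of the hypothesis.

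Next I would set up length bookkeeping. For any element $r \in R$, multiplication by $r$ on $N$ yields a short exact sequence
\[
0 \lra (0:r)_N \lra N \xra{r} rN \lra 0
\]
of $R$-modules of finite length, so additivity of length gives $\ell_R(N) = \ell_R((0:r)_N) + \ell_R(rN)$. Applying this to $r=a$ and to $r=b$, and substituting the hypothesis $(0:b)_N = aN$ into the second equation, one obtains
\[
\ell_R((0:a)_N) + \ell_R(aN) = \ell_R(N) = \ell_R(aN) + \ell_R(bN),
\]
whence $\ell_R((0:a)_N) = \ell_R(bN)$.

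Finally, combining the two steps, $bN \subseteq (0:a)_N$ is an inclusion of modules of equal finite length, hence is an equality. I do not anticipate a serious obstacle; the finite length hypothesis is precisely what is needed to promote the easy inclusion into an equality via length counting, and the symmetry of the conclusion with the hypothesis is manufactured by the symmetric appearance of $aN$ and $bN$ in the two length formulas.
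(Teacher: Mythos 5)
Your proof is correct and follows essentially the same route as the paper: establish the inclusion $bN\subseteq(0:a)_N$ from $a(bN)=b(aN)=0$, then use a length count based on the exact sequences $0\to(0:r)_N\to N\to rN\to 0$ (equivalently, $bN\cong N/(0:b)_N=N/aN$) to show $\ell((0:a)_N)=\ell(bN)$ and conclude by finite length. The only difference is bookkeeping: the paper folds the hypothesis into the isomorphism $bN\cong N/aN$, while you apply the kernel--image sequence twice, which is the same computation.
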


  \begin{proof}
{From} $a(bN)=b(aN)=b(0:b)_N=0$ one gets $(0:a)_N\supseteq  bN$.  A length 
count, using this inclusion and the composition $N/aN=N/(0:b)_N\cong bN$,
gives 
\begin{align*}
\ell(N)-\ell(aN)
=\ell((0:a)_N)
\ge\ell(bN)
=\ell(N/aN)
=\ell(N)-\ell(aN)\,.
\end{align*}
These relations imply $\ell((0:a)_N)=\ell(bN)$, hence $(0:a)_N=bN$.
  \end{proof}
 
For Cohen-Macaulay rings the situation is only slightly more complicated.

\begin{proposition}
For an element $a$ of $\fm$ the following conditions are equivalent.
\begin{enumerate}[\quad\rm(i)]
\item 
$a$ is an exact zero-divisor and $R$ is Cohen-Macaulay.
\item
$(0:a)_R=(b)\ne R$ for some $b$ in $R$, and $\depth R/(a)\ge\dim R$.
\item 
$(0:a)_R=(b)\ne R$ for some $b$ in $R$, and $R/(a)$ is Cohen-Macaulay
with $\dim R/(a)=\dim R$.
  \end{enumerate}
\end{proposition}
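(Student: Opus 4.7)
The overall plan is to handle (ii) $\Leftrightarrow$ (iii) immediately from the inequality $\depth\le\dim$, prove (i) $\Rightarrow$ (iii) via the quasi-Gorenstein machinery developed in Sections 2--3, and address the main implication (iii) $\Rightarrow$ (i) by induction on $d=\dim R$ with Lemma \ref{lem:exact_cm} as the base case.

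For (ii) $\Leftrightarrow$ (iii), the standard inequalities $\depth R/(a)\le\dim R/(a)\le\dim R$, together with the hypothesis $\depth R/(a)\ge\dim R$ in (ii), force all three quantities to equal $d:=\dim R$, which is exactly (iii). For (i) $\Rightarrow$ (iii), by \ref{rem:ezdqci} the ideal $(a)$ is quasi-complete intersection, hence quasi-Gorenstein by Theorem \ref{thm:hierarchy}(4); Lemma \ref{prop:depth} then gives $\grade_R R/(a)=\depth R-\depth R/(a)$, and since $a$ is a zero-divisor this grade vanishes. Combining with the Cohen-Macaulay hypothesis on $R$ yields $\depth R/(a)=\depth R=\dim R$, and equidimensionality of local Cohen-Macaulay rings, together with the fact that $a$ lies in a minimal prime of coheight $d$, forces $\dim R/(a)=d$.

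For (iii) $\Rightarrow$ (i), I argue by induction on $d$. If $d=0$ then $R$ is artinian, and Lemma \ref{lem:exact_cm} applied to $N=R$ with the roles of $a$ and $b$ swapped from its statement immediately gives $(0:b)_R=(a)$, so $a$ is an exact zero-divisor; artinian local rings are Cohen-Macaulay. For $d\ge1$, the plan is to locate $x\in\fm$ regular on both $R/(a)$ and $R/(b)$---which is then automatically regular on $R$ by the depth lemma applied to the short exact sequence $0\to R/(b)\xrightarrow{\cdot a}R\to R/(a)\to 0$---and pass to $R':=R/(x)$. Setting $a':=a+(x)$ and $b':=b+(x)$, regularity of $x$ on $R/(a)$ yields $(0:a')_{R'}=(b')$, the quotient $R'/(a')$ is Cohen-Macaulay of dimension $d-1$, and $\dim R'=d-1$. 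The inductive hypothesis applied to $R'$ then gives $(0:b')_{R'}=(a')$ and $R'$ Cohen-Macaulay, and these conclusions lift through the regular element $x$ (via a standard Artin-Rees/Krull argument showing $\bigcap_n((a)+x^nR)=(a)$) to $(0:b)_R=(a)$ and $R$ Cohen-Macaulay.

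The main obstacle is producing the element $x$: the condition $\fm\notin\Ass_R(R/(a))$ is immediate from $\depth R/(a)=d\ge1$, so the real content is $\fm\notin\Ass_R(R/(b))$. Assuming the contrary and choosing $v\in R\setminus(b)$ with $\fm v\subseteq(b)=(0:a)$, one observes that $av$ is nonzero (else $v\in(0:a)=(b)$) and that $\fm(av)=a\fm v\subseteq a(0:a)=0$, so $av$ is a socle element of $R$ and $\fm\in\Ass R$. Tracing $v$ modulo $(a)$ leads to the desired contradiction with $\depth R/(a)\ge1$: if $\bar v\ne0$ in $R/(a)$ then $\fm\bar v\subseteq(\bar b)R/(a)$, which combined with Cohen-Macaulayness first rules out $\bar b=0$ (else $\bar v$ is a socle witness in $R/(a)$) and then constrains $\bar b$ through the structure of the principal ideal $(\bar b)R/(a)$; if $\bar v=0$ then $v=au$ and one passes from the relation $\fm v\subseteq(b)$ to $\fm u\subseteq(0:a^2)$ with $u\notin(0:a^2)$, iterating and invoking Krull's intersection theorem to force termination. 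Executing this dichotomy cleanly---especially the subcase where $\bar b$ happens to be a zero-divisor on $R/(a)$---is the technical crux of the proof.
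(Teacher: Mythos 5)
Your handling of (ii)$\Leftrightarrow$(iii) and of (i)$\Rightarrow$(iii) is correct and close to the paper's (the paper proves (i)$\Rightarrow$(ii) via \eqref{eq:cmd2} and then (ii)$\Rightarrow$(iii) by the inequalities $\dim R\ge\dim R/(a)\ge\depth R/(a)$; your equidimensionality remark is a fine substitute). The genuine gap is in (iii)$\Rightarrow$(i). Your induction can only start once you have $x\in\fm$ regular on both $R/(a)$ and $R/(b)$, i.e.\ once you know $\fm\notin\Ass_R(R/(b))$ (equivalently, via the exact sequence $0\to R/(b)\xra{a}R\to R/(a)\to0$, once you know $\depth R\ge1$ --- note also that regularity of $x$ on $R$ should be deduced from $\Ass_R R\subseteq\Ass_R R/(b)\cup\Ass_R R/(a)$ and prime avoidance, not from the depth lemma alone). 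But this is precisely what your contradiction argument does not establish: after producing the socle element $av\ne0$, the subcase $\bar v\ne0$, $\bar b\ne0$ in $R/(a)$ is only described as ``constraining $\bar b$,'' and no contradiction is derived. Indeed, the data available at that point --- a nonzero $\bar v$ with $\fm\bar v\subseteq(\bar b)$ in a Cohen--Macaulay ring of positive depth --- is not contradictory by itself (in a discrete valuation ring with uniformizer $\bar b$ every $\bar v$ satisfies it), so any contradiction must use the finer relations involving $a$, which is never done. The other branch ($\bar v=0$, leading to $\fm u\subseteq(0:a^2)$, $u\notin(0:a^2)$) does not reproduce the original situation, since $(0:a^2)$ is not known to be principal with prescribed annihilator, so the proposed ``iterate and apply Krull intersection'' has no well-defined inductive step and in any case feeds back into the unresolved subcase. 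Thus the heart of the hardest implication is missing.

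For comparison, the paper's proof of (iii)$\Rightarrow$(i) needs neither a regular element nor induction on dimension. Since $ab=0$, one has $(a)\subseteq(0:b)_R$; set $K=(0:b)_R/(a)\subseteq R/(a)$. Every $\fp\in\Ass_RK$ lies in $\Ass_R(R/(a))$, and Cohen--Macaulayness of $R/(a)$ together with $\dim R/(a)=\dim R$ forces $\fp$ to be a minimal prime of $R$; hence $R_\fp$ is artinian, $(0:(a/1))_{R_\fp}=(b/1)R_\fp$, and Lemma \ref{lem:exact_cm} gives $(0:(b/1))_{R_\fp}=(a/1)R_\fp$, that is, $K_\fp=0$. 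As $\fp$ was an arbitrary associated prime, $K=0$, so $a$ is an exact zero-divisor, and then \eqref{eq:cmd2} yields $\depth R=\depth R/(a)=\dim R/(a)=\dim R$, so $R$ is Cohen--Macaulay. Your lifting steps (the Nakayama or Krull-intersection argument for $(0:b)_R=(a)$, and descent of Cohen--Macaulayness along the $R$-regular element $x$) are sound, but they are only usable after the existence of $x$ is secured; replacing the induction by the localization argument above closes the gap.
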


\begin{proof}
(i)$\implies$(ii).  Using \eqref{eq:cmd1}, we get $\depth R/(a)=\depth R=\dim R$.

(ii)$\implies$(iii).  Use the inequalities $\dim R\ge\dim R/(a)\ge\depth R/(a)$.

(iii)$\implies$(i). 
Set $K=(0:b)_R/(a)$ and pick $\fp\in\Ass_R(K)$.  From
$K\subseteq R/(a)$ we get $\fp\in\Ass_R R/(a)$.  The
Cohen-Macaulayness of $R/(a)$ implies that $\fp$ is minimal in $\Supp_R
R/(a)$ and satisfies $\dim(R/\fp)=\dim R/(a)$.  We have $\dim R/(a)=\dim
R$, so $\fp$ is minimal in $\Spec R$, hence the ring $R$ is artinian.
As one has $(0:(a/1))_{R_\fp}=(b/1)R_{\fp}$, Lemma \ref{lem:exact_cm}
yields $(a/1)R_\fp=(0:(b/1))_{R_{\fp}}$; that is, $K_{\fp}=0$.  Since $\fp$ can 
be chosen arbitrarily in $\Ass_R(K)$, we conclude that $K$ is equal to zero.

Thus, $a$ is an exact zero-divisor.  Now \eqref{eq:cmd1} and the hypotheses 
yield
  \[
\depth R=\depth R/(a)=\dim R/(a)=\dim R\,.\qedhere
  \]
  \end{proof}

  \begin{remark}
The implication (iii)$\implies$(i) may fail when $\dim R/(a)\ne\dim R$.

Indeed, set $R=k[x,y]/(xy,y^2)$, and let $a$ and $b$ denote the images
in $R$ of $x$ and $y$, respectively.   The equality $(0:a)_R=(b)$ shows
that $(0:a)_R$ is principal, and the isomorphism $R/(a)\cong k[y]/(y^2)$
that $R/(a)$ is Cohen-Macaulay.  However, $R$ is not Cohen-Macaulay;
neither is $a$ an exact zero-divisor, as $(0:b)_R=(a,b)\ne(a)$.
  \end{remark}

Now we settle, in the negative, a conjecture of Rodicio; see \cite[Conjecture\,11]{SoJPAA}.

\begin{theorem}
    \label{thm:agp}
If $k$ is a field of characteristic different from $2$, then in the ring
  \[
R=\frac{k[w,x,y,z]}{(w^2,\, wx-y^2\,, wy-xz,\,wz,\, x^2+yz, xy,\, z^2)}
  \]
the ideal $xR$ is quasi-complete intersection and has infinite CI-dimension. 
   \end{theorem}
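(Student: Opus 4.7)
The theorem asserts that $xR$ is quasi-complete intersection and has infinite CI-dimension. I would prove the two parts in sequence.

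For the q.c.i.\ claim, I first establish the structure of $R$. Using the seven defining quadratic relations, one reduces the ten degree-$2$ monomials in $\{w,x,y,z\}$ via $w^2 = wz = xy = z^2 = 0$, $y^2 = wx$, $xz = wy$, and $yz = -x^2$; the last identification is nondegenerate precisely because $\cha{k}\neq 2$. A direct check shows all triple products vanish, so $R$ is an $8$-dimensional $k$-algebra with basis $\{1, w, x, y, z, wx, wy, x^2\}$ and $\fm^3 = 0$. Since $R$ is artinian, $x$ is a zero-divisor, so by \ref{rem:ezdqci} it suffices to show $x$ is an exact zero-divisor. Multiplying each basis element by $x$ using the reductions above shows that $(0:x)_R$ has $k$-basis $\{y, wx, wy, x^2\}$, which coincides with $yR = \Span_k\{y, wy, wx, x^2\}$ (computed identically). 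Hence $(0:x)_R = yR$ is cyclic, and Lemma \ref{lem:exact_cm} applied to the finite-length module $N = R$ gives $(0:y)_R = xR$, so $x$ is an exact zero-divisor and $xR$ is quasi-complete intersection.

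For the CI-dimension claim, I argue by contradiction. Suppose $\cidim_R(xR) < \infty$. Since $R$ is artinian, the Auslander--Buchsbaum-type formula for CI-dimension of \cite{AGP2} gives $\cidim_R(xR) = \depth R - \depth_R(xR) = 0$, so there is a quasi-deformation $R \to R' \gets Q$ with $\pd_Q(R' \otimes_R xR) = \pd_Q R' = c$. By flat base change, $R'\otimes_R xR \cong R'/yR'$; and since the minimal resolution \eqref{eq:periodic} of $xR$ has constant Betti numbers equal to $1$, the complexity of $xR$ is $1$, and we may take $c = 1$. Thus $Q \thra R' = Q/(t)$ for a single $Q$-regular element $t$, and writing $\tilde y\in Q$ for a lift of $y$, the requirement $\pd_Q Q/(t,\tilde y) = 1$ forces $(t,\tilde y)$ to be a principal ideal of $Q$. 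Combined with $y\neq 0$ in $R'$ (by faithful flatness) and the flat-base-change identity $(0:y)_{R'} = xR'$, one deduces, after adjusting lifts, that $t$ is a unit multiple of $\tilde x\tilde y$ in $Q$.

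The remaining six quadratic relations of $R$ must then lift to $Q$ modulo $(\tilde x\tilde y)$, producing divisibility constraints such as $\tilde x\mid\tilde y^2$, $\tilde y\mid\tilde x^2$, and analogous conditions on $\tilde w,\tilde z$. The main obstacle---worked out in the construction of \cite{AGP1}---is to show that this system of lifted relations cannot be simultaneously realized in any local ring $Q$ for which $R\hookrightarrow Q/(\tilde x\tilde y)$ is faithfully flat. I expect the contradiction to emerge from a graded Hilbert-series count: the flatness of $R\hookrightarrow Q/(\tilde x\tilde y)$ (a hypersurface) forces specific dimensions on the graded pieces of $Q$, and the seven lifted relations of $R$ then over-determine these pieces. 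The relation $x^2 + yz = 0$ plays a central role, as the identification $yz = -x^2$ is precisely what obstructs $R$ from embedding into a hypersurface of the required form. Hence no such quasi-deformation exists, and $\cidim_R(xR) = \infty$.
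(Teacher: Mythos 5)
Your verification that $x$ is an exact zero-divisor (computing the length-$8$ basis, checking $(0:x)_R=yR$, and invoking Lemma \ref{lem:exact_cm}) is correct and is essentially the paper's argument for the quasi-complete intersection half. The problem is the second half: your argument for $\cidim_R(xR)=\infty$ does not reach a contradiction, it only predicts one. The final step --- that the seven lifted relations ``cannot be simultaneously realized in any local ring $Q$'' --- is exactly the content that needs proof, and the sketch offered (``I expect the contradiction to emerge from a graded Hilbert-series count'') is not available in this setting: in a quasi-deformation $R\to R'\gets Q$ the rings $R'$ and $Q$ are arbitrary local rings, not graded, and $R\to R'$ is an arbitrary faithfully flat local map whose closed fiber can be huge, so nothing forces a grading or controls Hilbert series of $Q$. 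Moreover, the earlier reduction ``we may take $c=1$'' is asserted without justification; finite CI-dimension together with complexity $1$ does not immediately yield a quasi-deformation of codimension one --- extracting that requires the support-variety machinery of \cite{AGP2} (including residue-field extensions), and you would need to set this up before the matrix-factorization step ($t$ a unit multiple of $\tilde x\tilde y$) even makes sense.

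The paper avoids any structural analysis of quasi-deformations. It argues: if $\cidim_R(xR)$ were finite, then by \cite[5.3]{AS} the graded module $\Ext{*}{R}{R/(x)}{k}$ is finite over a $k$-subalgebra of $\Ext{*}{R}{k}{k}$ generated by \emph{central} elements of degree $2$; but by the computation in \cite[p.~4]{AGP1} (this is where $\cha k\ne 2$ enters), $\Ext{2}{R}{k}{k}$ contains no nonzero central element of $\Ext{*}{R}{k}{k}$, so that subalgebra is just $k$ and $\Ext{n}{R}{R/(x)}{k}$ would vanish for $n\gg 0$ --- contradicting the $2$-periodic resolution \eqref{eq:periodic}, which gives $\Ext{n}{R}{R/(x)}{k}\cong k$ for all $n$. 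To repair your proof you would either have to supply the missing nonexistence argument for the hypersurface deformation (essentially redoing \cite{AGP1}) or switch to this cohomology-operator/centrality argument.
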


  \begin{proof}
Let $a$ and $b$ denote the images in $R$ of $x$ and $y$, respectively;
thus, $xR=(a)$.  A simple calculation yields $(0:a)_R=(b)$ and $(0:b)_R=(a)$, 
so $a$ is an exact zero-divisor, and hence $(a)$ is a quasi-complete 
intersection ideal; see~\ref{rem:ezdqci}.  

Assume $\cidim_RI$ is finite.  Yoneda products then turn $\Ext *R{R/(a)}k$ 
into a finite graded module that is finite over a $k$-subalgebra of $\Ext*Rkk$, 
generated by central elements of degree~$2$; see \cite[5.3]{AS}.  On the 
other hand, in \cite[p.~4]{AGP1} it is shown that $\Ext 2Rkk$ contains no 
non-zero central element of $\Ext *Rkk$, hence $\Ext nR{R/(a)}k=0$ for all 
$n\gg0$.  This is impossible, since \eqref{eq:periodic} with $N=R$ gives a 
free resolution of $R/(a)$ that yields $\Ext nR{R/(a)}k\cong k$ for $n\ge0$.
   \end{proof}

In \cite[1.1]{KST} \emph{exact sequences of pairs} are defined by analogy with 
regular sequences of elements.  Both notions are subsumed into the next concept.  

  \begin{chunk}
    \label{exact}
We say that a sequence $a_1,\dots,a_c$ in $R$ is \emph{exact} if  
$a_i$ in exact on $R/(a_1,\dots,a_{i-1})$ for $i=1,\dots,c$. 
An ideal that can be generated by an exact sequence is called \emph{exact}.
  \end{chunk}

  \begin{theorem}
    \label{thm:exact}
Let $a_1,\dots,a_c$ be an exact sequence in $R$ and set $I=(a_1,\dots,a_c)$.

The ideal $I$ then is quasi-complete intersection, and for $S=R/I$ one has
   \[
\grade_RS=\card\{i\in[1,c]\mid \text{$a_i$ is regular on $R/(a_1,\dots,a_{i-1})$}\}\,.
  \]
  \end{theorem}

  \begin{proof}
For $c=1$ see \ref{rem:ezdqci}, so fix $c\ge2$.  Set $I'=(a_1,\dots,a_{c-1})$ and $R'=R/I'$.  Let $E'$ 
and $E$ be the Koszul complexes on $\{a_1,\dots,a_{c-1}\}$ and $\{a_1,\dots,a_{c}\}$, respectively.
By induction, we assume $\HH{*}{E'}\cong\bw_*^{R'}(\HH1{E'})$ as algebras and $\HH1{E'}$ is $R'$-free.

Considering $E'$ as a DG subalgebra of $E$, set $a=a_c$, and choose $e\in E_1$ with $\dd(e)=a$.  
Thus, the elements of $E$ have the form $y'+ey''$ with unique $y',y''\in E'$, and $E/E'\cong\shift E'$
as DG $E$-modules; see \ref{ch:DGE}.  The exact sequence of DG $E$-modules
  \[
0\to E'\to E\to E/E'\to0
  \]
induces in homology an exact sequence of $R'$-modules
  \[
\cdots\to\HH n{E'}\xra{\,a\,}\HH{n}{E'}\to\HH nE\to\HH{n-1}{E'}\xra{\,a\,}\HH{n-1}{E'}\to\cdots
   \]
and hence an exact sequence of graded $\HH{*}{E'}$-modules
  \begin{equation}
    \label{eq:exact2}
0\to\HH{*}{E'}/a\HH{*}{E'}\to\HH{*}E\to\shift(0:a)_{\HH{*}{E'}}\to0
  \end{equation}

Let $H'$ be the image $H'$ of $\HH{*}{E'}/a\HH{*}{E'}$ in $\HH{*}E$.  Note that it is a graded $S$-subalgebra, 
with $H'\cong\HH{*}{E'}/a\HH{*}{E'}$ as graded algebras and $H'_1$ free over $S$.

If $a$ is $R'$-regular, then $(0:a)_{\HH{*}{E'}}=0$, so $\HH{*}E=H'$ by \eqref{eq:exact2}.  This
implies $\HH{*}E\cong\bw_*^S(\HH1E)$, and  hence $\grade_RS=\grade_RR'+1$ by \eqref{eq:grade1}.

If $a$ is an exact zero-divisor, we have $(0:a)_{\HH{*}{E'}}\cong\HH{*}{E'}/a\HH{*}{E'}$ because $\HH{*}{E'}$ is 
$R'$-free.  Thus, \eqref{eq:exact2} gives $\HH{*}E\cong H'\oplus \shift H'$ as graded $H'$-modules.  It follows 
that there exists an element $h$ in $\HH1E$, such that $\HH1E=H'_1\oplus R'h$ and $h'h=(-1)^{|h'|}hh'$ holds for $h'\in H'$.  
Since $h^2=0$ we get an isomorphism of 
algebras $\HH{*}E\cong\bw_*^{S}(\HH1E)$, and then $\grade_RS=\grade_RR'$ follows from \eqref{eq:grade1}.
  \end{proof}

  \begin{remark}
    \label{counterex}
Theorem \ref{composition}, proved by using Andr\'e-Quillen homology, gives an independent 
proof that exact ideals are quasi-complete intersection.  In an earlier version of this 
paper we had asked whether the converse holds.  Since then, a negative answer 
has been obtained by Kustin, \c Sega, and Vraciu~\cite{KSV}.
  \end{remark}

We finish with examples of exact zero-divisors, taken from the existing literature.

\begin{example}
  \label{ex:generic1}
Let $k$ be an algebraically closed field and $\ul R$ a graded ring of the form 
$k[x_1,\dots,x_e]/\ul I$, with $\deg(x_i)=1$ and $\ul I$ generated by forms of 
degree at least $2$.

The quadratic $k$-algebras $\ul R$ with $\rank_k\ul R_2=e-1$ are
parametrized by the points of the Grassmannian of subspaces of rank
$\binom{e}2+1$ in the $\binom{e+1}2$-dimensional affine space over $k$.
Conca \cite[Theorem\,1]{Co} proved that this Grassmannian contains
a non-empty open subset $U$, such that each $\ul R$ in $U$ contains
an element $a\in\ul R_1$ with $a^2=0$ and $a\ul R_1=\ul R_2\ne0$.
This implies $\rank_k(a)=e$ and $(a{\ul R})\subseteq (0:a)_{\ul R}$.
A~length count shows that equality holds, so $a$ is an exact zero-divisor.
 \end{example}

\begin{example}
  \label{ex:generic2}
Let $k$ and $\ul R$ be as in the preceding example

The Gorenstein $k$-algebras $\ul R$ with $\ul R_4=0\ne \ul R_3$ are
parametrized by the points of the $\binom{e+2}3$-dimensional projective
space over $k$.  This space contains a non-empty open subset $U$, such
that each $\ul R\in U$ contains some exact zero-divisor $a\in\ul R_1$:
this follows from Conca, Rossi, and Valla \cite[2.13]{CRV}; see also
\cite[3.5]{HS}.
 \end{example}

  \section{Grade}
    \label{S:Grade}

In this section $(R,\fm,k)$ denotes a local ring, $I$ is an ideal of $R$, and $S=R/I$.

Recall that $(R,\fm,k)$ is said to be \emph{quasi-homogeneous}
if there is a finitely generated graded $k$-algebra $\ul
R=\bigoplus_{i=0}^\infty{\ul R}_i$ with $\ul R_0=k$, such that
the $\fm$-adic-completion $\wh R$ of $R$ is isomorphic to the
$(\bigoplus_{i=1}^\infty\ul R_i)$-adic completion $\wh{\ul R}$ of $\ul R$.
The ideal $I$ is called quasi-homogeneous if $I={\ul I}\wh R$
for some homogeneous ideal ${\ul I}$ of $\ul R$.

   \begin{theorem}
    \label{thm:dim1}
If $I$ is quasi-complete intersection, then 
  \begin{equation}
  \label{eq:cmd2}
\grade_RS=\depth R-\depth S\,.
 \end{equation}

Furthermore, an equality 
  \begin{equation}
  \label{eq:dim1}
\grade_R S=\dim R-\dim S
 \end{equation}
holds when one of the following conditions is satisfied:
  \begin{enumerate}[\quad\rm(a)]
 \item
$\cidim_RS$ is finite.
 \item
$I$ is exact.
 \item
$I$ is quasi-homogeneous.
  \end{enumerate}
 \end{theorem}

\begin{proof} 
In view of Theorem \ref{thm:hierarchy}(4), the equality \eqref{eq:cmd2} follows 
from \eqref{eq:cmd1}.

For \eqref{eq:dim1} we give a different argument in each case.

(a) Lemma \ref{lem:hierarchy} gives a local ring $Q$, complete intersection 
ideals $J\subseteq I'$ of $Q$, and a flat local homomorphism $R\to Q/J$ 
with $I(Q/J)=I'/J$.  For $R'=Q/J$ and $S'=S\otimes_RR'$ we have 
$\dim R-\dim S=\dim R'-\dim S'$ because $S\to S'$ is a flat local 
homomorphism with $k\otimes_SS'\cong k\otimes_RR'$.  As $J$ and $I'$ 
are generated by regular sequences we have $\dim R'-\dim S'=\grade_{R'}S'$.  
Flatness yields for each $n$ an isomorphism $\Ext n{R'}{S'}{R'}\cong
\Ext nRSR\otimes_RR'$, hence $\grade_{R'}S'=\grade_RS$.

(b)  By hypothesis, $I=(a_1,\dots,a_c)$ for some exact sequence $a_1,\dots,a_c$.  
Set $d=\dim R$ and let $a$ be an exact element of $R$.  If $a$ is regular, then
$\dim R/(a)=d-1$, so in view of Theorem \ref{thm:exact} it suffices to show that 
$\dim R/(a)=d$ holds when $a$ is an exact zero-divisor.
This is evident for $d=0$, so we further assume $d\ge1$.  

Suppose, by way of contradiction, that $\dim R/(a)<d$ holds.  
Choose $\fq\in\Spec R$ with $\dim(R/\fq)=d$.  We must have $a\not\in\fq$, whence
  \[
\dim  R/(a)\le d-1=\dim R/(\fq+(a))\le \dim R/(a)\,.
  \]
Thus, some prime ideal $\fp$ containing $\fq+(a)$ satisfies $\dim (R/\fp)=\dim
R/(a)$.  It follows that $\fp$ is minimal over $(a)$.  Krull's Principal Ideal
Theorem now gives $\height\fp\le1$; in fact, equality holds, due to the inclusion 
$\fp\supsetneq\fq$.   

Now choose a generator $b$ of $(0:a)_R$.  The exact sequence of $R$-modules
  \[
0\to R/(a)\to R\to R/(b)\to0
  \]
yields $(R/(b))_\fq\cong R_\fq\ne0$, hence $b\in\fq\subseteq\fp$.
Localizing at $\fp$ and changing notation, we get a local ring 
$(R,\fm,k)$ with $\dim R=1$ and elements $a$ and $b$ in $\fm$,
such that the map $R/(a)\to R$ given by $c+(a)\mapsto bc$
is injective.  A result of Fouli and Huneke, see \cite[4.1]{FH}, 
implies that $ab$ is a parameter for $R$; this is absurd, since $ab=0$.

(c)  The ingredients of formula \eqref{eq:dim1} do not change under 
$\fm$-adic completion, so we may assume $R=\wh{\ul R}$ and $I={\ul I}R$, 
with $\ul I$ minimally generated by homogeneous elements 
$\ul a_1,\dots,\ul a_c$ in $\ul R$.  The Koszul complex  $\ul E$ on these
generators is naturally bigraded, so there exist homogeneous cycles 
$\ul z_1,\dots,\ul z_h$  in $\ul E_1$, whose homology classes minimally 
generate the graded $\ul R$-module $\HH1{\ul E}$.  Construction 
\ref{con:tate} then yields  complex $\ul F$ of graded $\ul R$-modules, 
with differentials of degree $0$.

The complex of $R$-modules $R\otimes_{\ul R}\ul F$ is isomorphic to
the Tate complex $F$ on $R\otimes_{\ul R}\ul E$ and $\{1\otimes \ul
z_1,\dots,1\otimes \ul z_h\}$, which is a free resolution of $S$ over $R$
by  \ref{ch:tate}.  On the category of \emph{finite
graded} $\ul R$-modules the functor $(R\otimes_{\ul R}-)$ is exact and
faithful, so  $\ul F$ is a graded free resolution of $\ul S=\ul R/{\ul
I}$ over $\ul R$.  As $(\ul F_n)_j=0$ for $j<n$, we get
  \begin{equation}
    \label{eq:2vars}
\sum_{n=0}^{\infty}\bigg(\sum_{j=n}^\infty\rank_{\ul R}(\ul F_n)_j\,s^j\bigg)t^n
=\frac{\prod_{u=1}^c(1+s^{\deg a_u}t)}{\prod_{v=1}^h(1-s^{\deg z_v}t^2)}\,.
  \end{equation}
by counting ranks of graded free $\ul R$-modules.  Set 
$\Hil Ss=\sum_{j\in\mathbb Z}\rank_kS_j\,s^n\in\mathbb{Z}[[s]]$ and define $\Hil Rs$ similarly.
By counting $k$-ranks in the exact sequence
  \[
\cdots\to \ul F_n\to \ul F_{n-1}\to\cdots\to \ul F_1\to \ul F_0\to \ul
S\to 0
  \]
we obtain the first equality of formal power series in the following formula:
  \[
\Hil{\ul S}s
=\Hil{\ul R}s\cdot\sum_{n=0}^{\infty}(-1)^n\bigg(\sum_{j=n}^\infty
  \rank_{\ul R}(\ul F_n)_j\,s^j\bigg)
=\Hil{\ul R}s\cdot\frac{\prod_{u=1}^c(1-s^{\deg a_u})}{\prod_{v=1}^h(1-s^{\deg z_v})}\,.
  \]
The second equality is obtained by evaluating formula \eqref{eq:2vars} at $t=-1$.

By the Hilbert-Serre Theorem, $\Hil{\ul S}s$ represents a rational
function in~$s$, and the order of its pole at $s=1$ equals the Krull
dimension of $\ul S$.  Equating the orders of the poles at $s=1$ in the
formula above, we get the second equality in the string
  \[
\dim S=\dim\ul S=\dim\ul R+h-c=\dim R+h-c=\dim R-\grade_RS\,.
  \]
The last equality is given by \eqref{eq:grade1}, while the other
two are standard.
 \end{proof}

The theorem above supports the following

  \begin{conjecture}
    \label{con:1}
The equality \eqref{eq:dim1} holds for each quasi-complete intersection~ideal$.$
 \end{conjecture}
 
  \begin{remark}
    \label{grade0}
It suffices to prove Conjecture \ref{con:1} in the special case $\grade_RS=0$. 

Indeed, set $d=\dim R$ and $g=\grade_RS$.  By prime avoidance, $I$ has
a minimal generating set in which the first $g$ elements form an $R$-regular 
sequence, $\bd b$.  By Lemma \ref{lem:step}, the ideal $\ov I=I/(\bd b)$ of
$\ov R=R/(\bd b)$ is quasi-complete intersection.  We now have $S\cong\ov R/\ov I$ 
with $\grade_{\ov R}S=0$, and $\dim\ov R=d-g$.
  \end{remark}

Recall that a finite $R$-module $M$ is called \emph{totally reflexive} if 
$M\cong M^{\vee\vee}$ and $\Ext nRMR=0=\Ext nR{M^\vee}R$ for $n\ge1$;
here $(-)^\vee$ stands for $\Hom R-R$.

  \begin{remark}
A quasi-complete intersection ideal $I$ is quasi-Gorenstein by Theorem 
\ref{thm:hierarchy}(4), so when $\grade_RS=0$ the $R$-module $S$ is
totally reflexive by \ref{ch:gorRing}.  Remark~\ref{grade0} now shows that 
a positive answer to the next question implies Conjecture~\ref{con:1}.
  \end{remark}

  \begin{question}
    \label{question}
Does $\dim_RM=\dim R$ hold for every totally reflexive $R$-module $M$? 
 \end{question}

  \begin{remark}
    \label{CM}
When $R$ is Cohen-Macaulay the answer to Question \ref{question} is 
positive:  If $M$ is totally reflexive, then for every $n\ge0$ it is an $n$th 
syzygy module in some minimal free resolution, so $\dim_RM=\depth R$; 
see, for instance, \cite[1.2.8]{Avr98}.
  \end{remark}

\section{Homology algebras}
  \label{S:Homology algebras}

When $(R,\fm,k)$ is a local ring $\Tor*Rkk$ 
has a structure graded-commutative $k$-algebra.  It is natural in $R$, so 
each ideal $I$ defines a homomorphism  of graded $k$-algebras 
$\Tor*Rkk\to\Tor*{R/I}kk$.  One of the few cases where it is fully understood 
is when $I$ is generated by a regular element; see \cite[3.4.1]{GL}.  

Without using that information, in this section we obtain an explicit result for 
all quasi-complete intersection ideals.  It plays an important role when 
analyzing the transfer of properties between $R$ and $S$; see 
Proposition \ref{prop:ascent/descent}.

   \begin{chunk}
  \label{ch:DGGamma}
A \emph{system of divided powers} on a graded $R$-algebra $A$ is an 
operation that for each $j\ge1$ and each $i\ge0$ assigns to every $a\in
A_{2i}$ an element $a^{(j)}\in A_{2ij}$, subject to certain axioms;
cf.~\cite[1.7.1]{GL}. A DG$\Gamma$ $R$-algebra is a DG $R$-algebra $A$
with divided powers compatible with the differential: $\dd(a^{(j)})=\dd
(a)a^{(j-1)}$.

Let $A$ be a DG$\Gamma$ algebra with $R\to A_0$ surjective and $A_n=0$ for 
$n<0$. Let $A_{\ges1}$ denote the subset of $A$, consisting of elements of
positive degree. Let $D(A)$ denote the graded $R$-submodule of $A$,
generated by $A_0+\fm A_{\ges1}$, all elements of the form  $uv$ with
$u,v\in A_{\ges1}$, and all $w^{(j)}$ with $w\in A_{2i}$, $i\ge1,\
j\ge 2$.  This clearly is a subcomplex of $A$, and it defines a complex
$Q^{\gamma}(A)=A/D(A)$ of $k$-vector spaces.

Given a set $\bd x$ of variables with $|x|\ge 1$ for all $x\in \bd x$ (where $|x|$ 
denotes the degree of $x$), we let $A\langle{\bd x}\rangle$
denote a DG$\Gamma$ algebra with
  \begin{gather*}
A\otimes_R\bw_*^R\bigg(\bigoplus_{\substack{x\in\bd x\\
 |x|\text{ odd}}}Rx\bigg)\otimes_R{\mathsf\Gamma}^R_*\bigg(\bigoplus_{\substack{x\in\bd x\\
 |x|\text{ even}}}Rx\bigg)
  \end{gather*}
as underlying graded algebra and differential compatible with that of $A$
and the divided powers of $x\in\bd x$. For every integer $n$ we set $\bd
x_n=\{x\in\bd x: |x|=n\}$.

A Tate resolution of a surjective ring homomorphism $R\to T$ is a 
quasi-isomor\-phism $R\langle\bd x\rangle\to T$, where $\bd x=\{x_i\}_{i\ges 1}$
and $|x_j|\ge|x_i| \ge1$ holds for all $j\ge i\ge1$.  Such a resolution always exists: 
see \cite[Thm.~1]{Ta}, \cite[1.2.4]{GL}, or \cite[6.1.4]{Avr98}.
  \end{chunk}
 
 \begin{chunk} 
  \label{ch:pi}
Any Tate resolution of $R\to k$ gives $\Tor*Rkk$ a structure of DG$\Gamma$
$k$-algebra, and this structure is independent of the choice of resolution.  We set
  \begin{equation}
    \label{eq:pi}
\pi_*(R)=Q^{\gamma}(\Tor*Rkk)\,.
  \end{equation}
We use the following natural isomorphisms as identifications:
  \begin{equation}
    \label{eq:pi1}
\pi_1(R)\cong\Tor1Rkk\cong\fm/\fm^2\,.
  \end{equation}

The assignment $R\mapsto\pi_*(R)$ is a functor from the category of local
rings and surjective homomorphisms to that of graded $k$-vector spaces.
  \end{chunk}

 \begin{theorem}
\label{thm:comparisonBetti}
Let $(R,\fm,k)$ be a local ring and $I$ an ideal of $R$.  
Set $S=R/I$ and let $\varphi\colon R\to S$ be the natural map.  Let $E$
be the Koszul complex on a minimal generating set of $I$ and
set $H=\HH1E$.

If $I$ is quasi-complete intersection, then there is an exact sequence 
  \begin{equation}
    \label{eq:sixterms}
0\to H/\fm H\to\pi_2(R)\xrightarrow{\pi_2(\varphi)}\pi_2(S)
\xrightarrow{\,\delta\,} I/\fm I\to\pi_1(R)
\xrightarrow{\pi_1(\varphi)}\pi_1(S)\to 0
  \end{equation}
of $k$-vector spaces, and there are isomorphisms of $k$-vector spaces
  \begin{equation}
    \label{eq:pi3}
\pi_n(\varphi)\colon\pi_n(R)\xra{\,\cong\,}\pi_n(S)
  \quad\text{for}\quad n\ge3\,.
  \end{equation}
  \end{theorem}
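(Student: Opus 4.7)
The plan is to construct a DG-$\Gamma$ algebra $Y$ that is a Tate resolution of $k$ over $R$, by extending the Tate complex $F=R\langle\bd v,\bd w\rangle$ of Construction~\ref{con:tate}; since $I$ is quasi-complete intersection, $F$ is already a DG-$\Gamma$ free resolution of $S$ over $R$.  Choose a minimal Tate resolution $Y_S=S\langle\bd t\rangle$ of $k$ over $S$ and form $Y=F\langle\bd t\rangle$ by adjoining the same divided-power variables, inductively lifting the differentials of $Y_S$ through the quasi-isomorphism $F\to S$. A filtration argument on the $\bd t$-degree shows that $Y$ is a DG-$\Gamma$ resolution of $k$ over $R$, and that the natural surjection $Y\to Y_S$ (extending $\varphi\colon R\to S$) has kernel the DG-$\Gamma$ ideal $K=(\bd v,\bd w)\subseteq Y$.

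Apply the indecomposables functor $Q^\gamma$ of \ref{ch:DGGamma}. Since $I\subseteq\fm$ one has $\dd v_i=a_i\in\fm$, and since $\bd a$ generates $I$ minimally, $z_j\in\fm E_1$ by Construction~\ref{con:tate}, so $\dd w_j\in\fm Y_1$; both lie in $D(Y)$. Hence the image $\ov K$ of $K$ in $Q^\gamma(Y)$ is the complex concentrated in degrees $2$ and $1$ with zero differential, $\ov K_2=k\bd w$ and $\ov K_1=k\bd v$, carrying natural identifications $\ov K_1\cong I/\fm I$ (via $v_i\mapsto a_i$) and $\ov K_2\cong H/\fm H$ (via $w_j\mapsto[z_j]$). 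The short exact sequence of complexes $0\to\ov K\to Q^\gamma(Y)\to Q^\gamma(Y_S)\to 0$ then yields a long exact sequence
\[
\cdots\to H_n\ov K\to\pi_n(R)\xrightarrow{\pi_n(\varphi)}\pi_n(S)\to H_{n-1}\ov K\to\cdots,
\]
using $\pi_*(R)=H_*Q^\gamma(Y)$ (a standard invariance property of Tate resolutions) and $\pi_*(S)=Q^\gamma(Y_S)$ (the latter because $Y_S$ is minimal). For $n\ge4$ both adjacent terms $H_n\ov K$ and $H_{n-1}\ov K$ vanish, so $\pi_n(\varphi)$ is an isomorphism, giving \eqref{eq:pi3}; for $n\le 2$ the sequence specializes to the six-term sequence \eqref{eq:sixterms} after identifying the connecting maps.

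The principal obstacle is to secure the isomorphism at $n=3$: the long exact sequence above only yields $0\to\pi_3(R)\to\pi_3(S)\to H/\fm H$, so one must show that the connecting map $\pi_3(S)\to H/\fm H$ vanishes. Concretely, this amounts to arranging the inductive construction of $\bd t$ in degree $3$ so that $\dd t$ has no $\bd w$-term with unit coefficient. The plan is to verify this by exploiting the qci structure of $F$: because $F$ is a minimal DG-$\Gamma$ model of $S$ over $R$ with generators only in degrees $1$ and $2$, the obstruction class to a ``clean'' lift vanishes, and any would-be $\bd w$-correction can be absorbed into $\fm\cdot K$, which is killed in $\ov K$. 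Once this is accomplished, identifying the remaining connecting maps with those in \eqref{eq:sixterms}---in particular the map $\delta\colon\pi_2(S)\to I/\fm I$ and the map $I/\fm I\to\pi_1(R)=\fm/\fm^2$ sending $a+\fm I$ to $a+\fm^2$---is routine.
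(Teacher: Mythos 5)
Your overall strategy is the same as the paper's: extend the Tate resolution $F=R\langle\bd v,\bd w\rangle$ of $S$ to a DG$\Gamma$ resolution $Y$ of $k$ over $R$ mapping onto a minimal Tate resolution of $k$ over $S$, apply $Q^{\gamma}$, and read off \eqref{eq:sixterms} and \eqref{eq:pi3} from the resulting long exact sequence, whose outer terms are concentrated in degrees $1$ and $2$ with $\ov K_1\cong I/\fm I$ and $\ov K_2\cong H/\fm H$. Up to that point the proposal is sound. The problem is exactly the step you flag as ``the principal obstacle,'' and what you offer there is not an argument. The vanishing of the connecting map $\pi_3(S)\to H/\fm H$ amounts to showing that when $\dd t$, for $t$ a degree-$3$ variable of $Y$, is written as $\sum_x a_x x+\sum_w b_w w+(\text{decomposables})$, every coefficient $b_w$ lies in $\fm$ (the statement $a_x\in\fm$ is the easy half, since $\dd t$ maps to a cycle of the \emph{minimal} resolution over $S$). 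Nothing about $F$ being a ``minimal model of $S$ with generators in degrees $1$ and $2$'' forces this: the lift of a cycle from $S\langle\bd t_{\les 2}\rangle$ to a cycle of $F\langle\bd t_{\les 2}\rangle$ may a priori require a correction supported on $\bd w$ with unit coefficients, and asserting that ``the obstruction class vanishes'' or that the correction ``can be absorbed into $\fm\cdot K$'' is just a restatement of the conclusion. The paper's proof of this point is genuinely nontrivial and uses the divided power structure: if some $b_w$ were a unit, then the expansion of $(\dd t)^{(j)}$ contains $w^{(j)}$ with unit coefficient $(b_w)^j$, which the Leibniz rule forbids for a boundary, so $\cls(\dd t)^{(j)}\ne0$ for every $j$; transporting through the quasi-isomorphism onto $S\langle\bd y_1,\bd y_2\rangle$ this contradicts the fact that $j$-th divided powers of even homology classes there vanish for $j>\edim S$ (Avramov, \cite[1.7]{Avr77}). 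Some argument of this kind (or an appeal to a result that encapsulates it) is indispensable; note that the analogous statement via Andr\'e--Quillen homology only gives \eqref{eq:pi3} in characteristic $0$, so the claim cannot be purely formal.

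A secondary soft spot: you invoke $\pi_*(R)\cong\HH{*}{Q^{\gamma}(Y)}$ as ``a standard invariance property of Tate resolutions,'' but $Y$ is not minimal, and this identification needs justification. The paper obtains it by comparing $Y$ with a minimal DG$\Gamma$ resolution $R\langle\bd x'\rangle$ via \cite[3.2.1(iii)]{GL}, and that comparison itself uses the same Claim (the induced differential on $Q^{\gamma}(Y)$ vanishes in odd degrees, hence in all degrees $\ne2$), as well as the identification of the composite with $\pi_n(\varphi)$. So the divided-powers argument above is doing double duty in the paper's proof, and your outline cannot bypass it at either place.
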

 
   \begin{Remark}
The statement of the theorem and its proof are reminiscent of 
those of \cite[1.1]{Avr77}, but neither result implies the other one.

Parts of the theorem can also be obtained by using Andr\'e-Quillen homology.

Indeed, $I/\fm I\cong\AQ1{S}Rk$ holds by \cite[6.1]{AnBook}, 
$H/\fm H\cong\AQ2{S}Rk$ by \cite[15.12]{AnBook}, and $\pi_n(R)\cong\AQ n{k}Rk$ 
for $n=1,2$ by \cite[6.1 and 15.8]{AnBook}.  Furthermore, \eqref{eq:jz} applied to the 
homomorphisms $R\to S\to k\xra{=}k$ yields an exact sequence
  \begin{equation}
    \label{eq:pi4}
\cdots\to\AQ{n}kRk\lra\AQ{n}kSk\lra\AQ {n}SRk\xra{\eth_{n}}\AQ{n-1}kRk\to\cdots
  \end{equation}
If $\HH1E$ is free, then $\eth_3$ is injective by \cite[Theorem~1]{RoJPAA}; this yields 
\eqref{eq:sixterms}.

For $n\ge3$ one has $\AQ nSRk=0$ by \ref{ch:aq}.  If $\operatorname{char}(k)=0$, 
then  \cite[19.21]{AnBook} gives $\pi_n(R)\cong\AQ n{k}Rk$ for all $n$, so in this case 
\eqref{eq:pi3} follows from \eqref{eq:pi4}.  However, $\pi_*(R)$ and $\AQ *{k}Rk$
are \emph{not} isomorphic when $\operatorname{char}(k)>0$; see~\cite{AnEM}.
    \end{Remark}

  \begin{construction}
    \label{con:tate-comparison}
Choose a Tate resolution $S\langle{\bd y}\rangle\to k$ with $\dd(S\langle
{\bd y}\rangle)\subseteq\fm(S\langle {\bd y}\rangle)$: this is always
possible, see \cite[1.6.4]{GL} or \cite[6.3.5]{Avr98}.

In view of \ref{ch:tate}, the complex $F$ from Construction \ref{con:tate} gives a Tate
resolution $R\langle{\bd v},{\bd w}\rangle\to S$.  This map can be extended 
to a surjective quasi-isomorphism
  \begin{equation*}
\alpha\col R\langle{\bd v},{\bd w},{\bd{x}} \rangle\to S\langle
\bd{y}\rangle
  \end{equation*}
of DG$\Gamma$ algebras with ${\bd x}=\{x_i\}_{i\ges1}$ and
$\alpha(x_i)=y_i$ for each $i$; see \cite[1.3.5]{GL}.  In particular,
$R\langle {\bd v},{\bd w},{\bd{x}} \rangle$ is a free, but not necessarily
minimal, resolution of $k$ over $R$.
  \end{construction}

 \stepcounter{theorem}
  
\begin{proof}[Proof of Theorem {\rm \ref{thm:comparisonBetti}}]
The homomorphisms of DG$\Gamma$ algebras 
  \[
R\langle{\bd v},{\bd w}\rangle
\hookrightarrow R\langle{\bd v},{\bd w},\bd{x}\rangle
\xrightarrow{\alpha}S\langle\bd{y}\rangle
  \]
induce an exact sequence of complexes of $k$-vector-spaces:
   \begin{equation}
 \label{short}
0\to Q^{\gamma}(R\langle{\bd v},{\bd w}\rangle)
\to Q^{\gamma}(R\langle{\bd v},{\bd w},\bd{x}\rangle)
\xra{Q^{\gamma}(\alpha)} Q^{\gamma}(S\langle{\bd y}\rangle)
\to 0
   \end{equation}
Due to the inclusions $\dd(R\langle{\bd v},{\bd
w}\rangle) \subseteq\fm R\langle{\bd v},{\bd w}\rangle$ and
$\dd(S\langle\bd{y}\rangle)\subseteq\fm S\langle\bd{y}\rangle$,
the complexes at both ends of \eqref{short} have zero differentials.
This gives the following expressions:
  \begin{align}
   \label{eq:Hi1}
\HH n{Q^{\gamma}(R\langle\bd{v},\bd{w}\rangle}
& =\begin{cases}
k\bd v\cong I/\fm I &\text{for }n=1\,,
  \\
k\bd w\cong H/\fm H &\text{for }n=2\,,
  \\
0 &\text{for }n\ne1,2\,.
  \end{cases}
\\
   \label{eq:Hi2}
\HH n{Q^{\gamma}(S\langle\bd{y}\rangle}
&=\ \ \,k\bd y_n \cong\pi_n(S)\hspace{13pt}\text{ for all }n\,.
  \end{align}

The next statement is the key point of the argument.

  \begin{Claim}
For $G=R\langle{\bd v},{\bd w},{\bd x}\rangle$ one has 
$\dd_n(G)\subseteq\fm D_{n-1}(G)$ for $n\ne2$.
  \end{Claim}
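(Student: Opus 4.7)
The plan is to reduce the claim to statements about the differentials of the generators of $G$ as a DG$\Gamma$ algebra, then propagate via the Leibniz rule and the divided-power formula $\dd(w^{(j)})=\dd(w)w^{(j-1)}$. On the generators already in $B=R\langle\bd v,\bd w\rangle$, one has $\dd v_i=a_i\in\fm\subseteq\fm D_0(G)$ (using $D_0(G)=G_0=R$), and $\dd w_j=z_j\in\fm E_1\subseteq\fm G_1$ by the minimality of the cycle set $\bd z$ that generates $H=\HH 1 E$ over $S$; the case $|w_j|=2$ is precisely the excluded degree, which explains the exception in the claim.

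For the Tate variables $\bd x$, I will construct them inductively in order of non-decreasing degree, aiming at $\dd x_i\in\fm D(G^{(i-1)})$ whenever $|x_i|\neq 2$, where $G^{(i-1)}=R\langle\bd v,\bd w,x_1,\dots,x_{i-1}\rangle$. Minimality of the Tate resolution $S\langle\bd y\rangle\to k$, imposed in Construction~\ref{con:tate-comparison}, yields $\dd y_i\in\fm\cdot S\langle\bd y\rangle$; writing $\dd y_i=\sum_\ell s_\ell u_\ell$ with $s_\ell\in\fm\cdot S$ and each $u_\ell$ a monomial in $\bd y$ of length $\geq 1$, I lift each $u_\ell$ to a decomposable element $\tilde u_\ell\in D(G^{(i-1)})$ (as a product or divided power of previously adjoined $x_j$'s) and each $s_\ell$ to $r_\ell\in\fm$, setting $c_i=\sum_\ell r_\ell\tilde u_\ell\in\fm D(G^{(i-1)})$. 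If $c_i$ fails to be a cycle, I correct it by a boundary coming from the acyclic kernel of $\alpha^{(i-1)}$, chosen so as to remain in $\fm D(G^{(i-1)})$; for $|x_i|=1$ the construction is immediate, as $\dd y_i\in\fm\cdot S$ lifts directly to $\fm\subseteq R=G_0$.

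Once the differentials of all generators lie in $\fm D(G)$ in the relevant degrees, the claim for arbitrary $g\in G_n$ with $n\neq 2$ follows by Leibniz. Each element of $G_n$ is an $R$-linear combination of monomials in the generators, and differentiating such a monomial yields a sum whose terms each contain a differentiated generator multiplied by the remaining positive-degree factors. Since the product of two positive-degree elements lies in the product part of $D(G)$ and $\fm$-scalars pass through multiplication, each such term lands in $\fm D_{n-1}(G)$, and $R$-linearity finishes.

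The main obstacle will be the boundary-correction step in the inductive construction: one must ensure the cycle-completion of $c_i$ can be carried out within $\fm D(G^{(i-1)})\cap\ker\alpha^{(i-1)}$, rather than only within $\ker\alpha^{(i-1)}$. This requires refining the standard acyclicity of $\ker\alpha^{(i-1)}$ to this sub-$R$-module, leveraging the specific Tate-type structure of $G^{(i-1)}$ over $B$ and the minimality of the differentials in $B$ that is guaranteed by the quasi-complete intersection hypothesis on~$I$.
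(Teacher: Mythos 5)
Your reduction of the claim to the differentials of the Tate variables, followed by Leibniz propagation, is a reasonable skeleton, and in the easy degrees it does work (for a variable of degree $n\ge4$ the coefficients on ${\bd x}_{n-1}$ in any admissible choice of $\dd x$ lie in $\fm$, because $\alpha$ matches pure $\bd x$-monomials bijectively with $\bd y$-monomials and kills $\bd v,\bd w$; the paper gets all cases $n\ne 3$ even more directly from the exact sequence \eqref{short} together with \eqref{eq:Hi1} and \eqref{eq:Hi2}, with no special choice of $\bd x$ needed). The problem is the step you yourself flag as ``the main obstacle'': it is not a technical refinement that can be deferred, it \emph{is} the claim. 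For a degree-$3$ variable the coefficients that must be forced into $\fm$ are those on the divided-power variables $\bd w$ coming from $\HH 1E$. Minimality of $S\langle{\bd y}\rangle$ gives no information about them, since $\alpha$ sends $\bd w$ to $0$ (they are invisible in $Q^{\gamma}(S\langle{\bd y}\rangle)$), and the standard acyclicity of $\Ker\alpha$ only produces \emph{some} bounding element, with no control over its $\bd w$-components; asserting that the correction can be taken in $\fm D\cap\Ker\alpha$ is a restatement of what has to be proved, and nothing in the minimality of $B=R\langle{\bd v},{\bd w}\rangle$ yields it. (Two side issues: the literal target $\dd x_i\in\fm D$ is too strong --- an $\fm$-multiple of a single variable lies in $D(G)$ but not in $\fm D(G)$; what is needed, and what the paper proves, is $\dd(G_n)\subseteq\fm G_{n-1}+D_{n-1}(G)$. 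Also, the exclusion of $n=2$ is not caused by $\bd w$, whose differentials are the minimally chosen cycles $z_j\in\fm E_1$, but by the variables ${\bd x}_2$, whose differentials involve the $\bd v$ with unit coefficients whenever the composite resolution of $k$ is non-minimal in degree $2$.)

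The idea that closes this gap in the paper is absent from your proposal. Writing $z\in\dd(G_3)$ as $\sum_{x\in{\bd x}_2}a_xx+\sum_{w\in{\bd w}}b_ww+c$ with $c$ decomposable, the inclusion $a_x\in\fm$ does follow, as in your plan, from the minimality of $S\langle{\bd y}\rangle$; but for the $b_w$ one argues by contradiction with divided powers: if some $b_w$ were a unit, then $w^{(j)}$ occurs in $z^{(j)}$ with the invertible coefficient $(b_w)^j$, whereas by the Leibniz rule no boundary in $R\langle{\bd v},{\bd w},{\bd x}_1,{\bd x}_2\rangle$ has an invertible coefficient on $w^{(j)}$; hence $\cls(z)^{(j)}\ne0$ for every $j$, and, transporting through the quasi-isomorphism $\alpha$, one obtains an even-degree class in $\HH{*}{S\langle{\bd y}_1,{\bd y}_2\rangle}$ all of whose divided powers are nonzero, contradicting \cite[1.7]{Avr77}, which kills the $j$-th divided powers for $j>\rank_k(\fm/\fm^2)$. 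Some argument of this kind is exactly what your boundary-correction step would have to contain; as written, the proposal does not prove the claim (and, even if completed, it would establish it only for a specially constructed $G$ rather than for the $G$ fixed in Construction \ref{con:tate-comparison}, although that weaker version would still suffice for the proof of Theorem \ref{thm:comparisonBetti}).
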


Indeed, for $n\ne3$ this follows from the exact sequence \eqref{short} 
and the equalities in \eqref{eq:Hi1} and \eqref{eq:Hi2}.  Next we prove
$\dd(G_3)\subseteq\fm G_2+D_2(G)$.  Write $z\in\dd(G_3)$ as
   \begin{equation}
 \label{eq:cycle}
z=\sum_{x\in{\bd x}_2} a_xx + \sum_{w\in\bd w} b_ww+c\,.
   \end{equation}
with $a_x$ and $b_w$ in $R$ and $c\in D_2(G)$.  One has
$\dd(\alpha(z))=\alpha(\dd(z))=0$, so $\alpha(z)$ is a cycle
the minimal free resolution $S\langle{\bd y}\rangle$; this gives
the inclusion below:
  \[
\sum_{x\in{\bd x}_2} \varphi(a_x)y + \alpha(c)=\alpha(z)\in \fm
S\langle{\bd y}\rangle\,.
  \]
{From} this formula, we conclude that in \eqref{eq:cycle} we have $a_x\in\fm$
for each $x\in{\bd x}_2$.  

Now we show that in \eqref{eq:cycle} each $b_w$ is in $\fm$.  Assume,
by way of contradiction, that $b_w\notin\fm$ holds for some $w\in\bd
w$.  Note that $R\langle{\bd v},{\bd w},{\bd x}_1,{\bd x}_2\rangle$
has an $R$-basis consisting of products involving elements from
${\bd v}\sqcup{\bd x}_1$ and divided powers of elements from ${\bd
w}\sqcup{\bd x}_2$.  When the boundary of an element of $R\langle{\bd
v},{\bd w},{\bd x}_1,{\bd x}_2\rangle$ is written in this basis,
the coefficient of $w^{(j)}$ cannot be invertible; this follows from
the Leibniz rule.  On the other hand, the defining properties of
divided powers, see \cite[1.7.1]{GL}, imply that in the expansion of
$z^{(j)}$ the element $w^{(j)}$ appears with coefficient $(b_w)^j$.
In homology, this means $\cls(z)^{(j)}=\cls(z^{(j)})\ne0$.
Since $\alpha$ is a quasi-isomorphism, for every $j\ge1$ we
get $\cls(\alpha(z))^{(j)}\ne0$ in $\HH{*}{S\langle{\bd y}_1,{\bd
y}_2\rangle}$.  This is impossible, as for $j>\rank_k(\fm/\fm^2)$
the $j$th divided power of every class of even degree in
$\HH{*}{S\langle{\bd y}_1,{\bd y}_2\rangle}$ is equal to zero; see
\cite[1.7]{Avr77}.  This finishes the proof of the claim.

\medskip

The claim gives, in particular, $\dd_n(Q^{\gamma}(R\langle{\bd v},{\bd
w},\bd{x}\rangle))=0$ for all \emph{odd} $n$.  By \cite[3.2.1(iii)
and its proof]{GL}, there is a quasi-isomorphism 
$\rho\colon R\langle{\bd v},{\bd w},\bd{x}\rangle\to R\langle\bd{x}'\rangle$
of DG$\Gamma$ $R$-algebras, such that $R\langle\bd{x}'\rangle$ is
a minimal DG$\Gamma$ algebra resolution of $k$, and the induced
map $Q^{\gamma}(\rho)\colon Q^{\gamma}(R\langle{\bd v},{\bd
w},\bd{x}\rangle)\to Q^{\gamma}(R\langle\bd{x}'\rangle)$ is a
quasi-isomorphism.  Choose, by \cite[1.8.6]{GL}, a quasi-isomorphism
$\sigma\colon R\langle\bd{x}'\rangle\to R\langle{\bd v},{\bd
w},\bd{x}\rangle$ of DG$\Gamma$ $R$-algebras.  The minimality of
$R\langle\bd{x}'\rangle$ implies that the composition $\rho\sigma\colon
R\langle\bd{x}'\rangle\to R\langle\bd{x}'\rangle$ is an isomorphism, see
\cite[1.9.5]{GL}, hence so is the map $Q^{\gamma}(\rho\sigma)$.  It is
equal to $Q^{\gamma}(\rho)Q^{\gamma}(\sigma)$, so $Q^{\gamma}(\sigma)$
is a quasi-isomorphism.  Now form the composition of $k$-linear maps
  \[
 \pi_n(R)\cong
\HH n{Q^{\gamma}(R\langle\bd{x}'\rangle)}\cong \HH
n{Q^{\gamma}(R\langle{\bd v},{\bd w},\bd{x}\rangle)} \xrightarrow{\HH
n{Q^{\gamma}(\alpha)}} \HH n{Q^{\gamma}(S\langle\bd{y}\rangle)}
 \cong\pi_n(S)
  \]
where the first isomorphism is due to the minimality of
$R\langle\bd{x}'\rangle$, the second one is $\HH n{Q^{\gamma}(\sigma)}$,
and the third one is \eqref{eq:Hi2}.  As $\alpha\sigma\colon
R\langle\bd{x}'\rangle\to S\langle\bd{y}\rangle$ induces
the identity on $k$, the composed map is, by definition,
$\pi_n(\varphi)\colon\pi_n(R)\to\pi_n(S)$.  It follows that the homology
exact sequence of the exact sequence \eqref{short} is isomorphic to
  \[
\cdots \to\HH n{Q^{\gamma}(R\langle\bd{v},\bd{w}\rangle}
\to\pi_n(R)\xrightarrow{\pi_n(\varphi)}\pi_n(S)
\xrightarrow{\,\eth_n\,}\HH {n-1}{Q^{\gamma}(R\langle\bd{v},\bd{w}\rangle}
\to\cdots
  \]
In view of the isomorphisms in \eqref{eq:Hi1}, it remains to prove
$\eth_3=0$.  This follows from the construction of the connecting
isomorphism, and the claim with $n=3$.
 \end{proof}

  \section{Poincar\'e series}
    \label{S:Poincare series}

In this section $(R,\fm,k)$ is a local ring, $I$ an ideal, $S=R/I$, and $N$ 
a finite $S$-module. Recall that the {\it Poincar\'e series} of $N$ over $S$ 
is the formal power series
  \[
\Poi NSt=\sum_{n=0}^{\infty}\rank_k\Tor nSkN\,t^n\,.
  \]
Our goal is to relate it to $\Poi NRt$ when $I$ is quasi-complete intersection.
The case $N=k$ is of special interest, as the Poincar\'e series of the residue
field encodes important information on how far the ring is from being regular.  
  \medskip
  
  \setcounter{equation}{0}

The \emph{deviations} of $S$ are defined using the vector spaces in \eqref{eq:pi},
by the formula
  \begin{equation}
    \label{eq:deviation}
\varepsilon_n(S)=\rank_k\pi_n(S) \quad\text{for}\quad n\in\mathbb Z\,.
  \end{equation}
They appear in a well-known formula, see \cite[3.1.3]{GL} or \cite[7.1.3]{Avr98}:
  \begin{equation}
    \label{eq:pdct}
\Poi kSt
=\frac{\prod_{i=0}^{\infty}(1+t^{2i+1})^{\varepsilon_{2i+1}(S)}}
     {\prod_{i=0}^{\infty}(1-t^{2i+2})^{\varepsilon_{2i+2}(S)}}\,.
  \end{equation}

For the next theorems, recall that $\edim R$ stands for $\rank_k(\fm/\fm^2)$.

  \begin{theorem}
    \label{thm:Pchange}
When $I$ is quasi-complete intersection the following equality holds:
  \begin{equation}
    \label{eq:Pchange}
\Poi kSt\cdot\frac{(1-t)^{\edim S}}{(1-t^2)^{\depth S}}
=\Poi kRt\cdot\frac{(1-t)^{\edim R}}{(1-t^2)^{\depth R}}\,.
   \end{equation}
  \end{theorem}

  \begin{proof}
Set $g=\grade_RS$, $h=\rank_S\HH1E$, and $c=\rank_k I/\fm I$.  The
equalities
  \begin{align*}
\Poi kSt
&=\frac{(1+t)^{\varepsilon_1(S)}}
       {(1-t^2)^{c-h+\varepsilon_1(S)-\varepsilon_1(R)+\varepsilon_2(R)}}
       \cdot
       \frac{\prod_{i=1}^{\infty}(1+t^{2i+1})^{\varepsilon_{2i+1}(S)}}
      {\prod_{i=1}^{\infty}(1-t^{2i+2})^{\varepsilon_{2i+2}(S)}}
  \\
&= \frac1{(1-t^2)^g}
     \cdot\frac{(1-t)^{\varepsilon_1(R)}}{(1-t)^{\varepsilon_1(S)}}
     \cdot\frac{(1+t)^{\varepsilon_1(R)}}{(1-t^2)^{\varepsilon_2(R)}}
     \cdot\frac{\prod_{i=1}^{\infty}(1+t^{2i+1})^{\varepsilon_{2i+1}(R)}}
     {\prod_{i=1}^{\infty}(1-t^{2i+2})^{\varepsilon_{2i+2}(R)}}
  \\
&=\frac{(1-t^2)^{\depth S}}{(1-t^2)^{\depth R}}
    \cdot\frac{(1-t)^{\varepsilon_1(R)}}{(1-t)^{\varepsilon_1(S)}}
    \cdot\Poi kRt
   \end{align*}
are obtained by applying \eqref{eq:pdct} and \eqref{eq:sixterms} for the first one, 
\eqref{eq:grade1} and \eqref{eq:pi3} for the second, \eqref{eq:cmd2} and \eqref{eq:pdct} for the third.  
Finally, $\varepsilon_1(S)=\edim S$ by \eqref{eq:pi1}.
  \end{proof}

\begin{theorem}
\label{PM-comparison}
If $I$ is a quasi-complete intersection ideal satisfying $I\cap\fm^2\subseteq I\fm$, 
then for every finite $S$-module $N$ the following equality holds:
  \begin{equation}
    \label{eq:PM-comparison}
\Poi NSt\cdot\frac{(1-t)^{\edim S}}{(1-t^2)^{\depth S}}
=\Poi NRt\cdot\frac{(1-t)^{\edim R}}{(1-t^2)^{\depth R}}\,.
   \end{equation}
\end{theorem}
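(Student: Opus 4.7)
The strategy is to reduce Theorem~\ref{PM-comparison} to Theorem~\ref{thm:Pchange} by recognizing $\vf\colon R\to S$ as a \emph{large} homomorphism in the sense of Levin, and then invoking a standard transfer result from the theory of infinite free resolutions.

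First, I would translate \eqref{eq:PM-comparison} into the multiplicative form $\Poi NRt=\Poi NSt\cdot\Poi SRt$. The hypothesis $I\cap\fm^2\subseteq I\fm$ is equivalent to the injectivity of the natural map $I/\fm I\to\fm/\fm^2$, and therefore gives $\edim R-\edim S=\rank_k(I/\fm I)=c$; together with $\depth R-\depth S=\grade_RS=c-h$ from Lemmas~\ref{prop:depth} and~\ref{lem:principal}, a short manipulation rewrites \eqref{eq:PM-comparison} as $\Poi NRt=\Poi NSt\cdot(1+t)^c/(1-t^2)^h$. Since the minimal Tate resolution $F$ of Construction~\ref{con:tate} is a minimal $R$-free resolution of $S$, the rank count afforded by \eqref{eq:tatec} yields $\Poi SRt=(1+t)^c/(1-t^2)^h$, so the desired identity \eqref{eq:PM-comparison} is indeed equivalent to $\Poi NRt=\Poi NSt\cdot\Poi SRt$.

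Next I would apply this reduction to $N=k$: the $N=k$ case of \eqref{eq:PM-comparison} is Theorem~\ref{thm:Pchange}, which was just proved, and hence is equivalent to the product identity $\Poi kRt=\Poi kSt\cdot\Poi SRt$. By Levin's theorem on large homomorphisms (see~\cite[3.3.5]{Avr98}), this single identity at $N=k$ is equivalent both to largeness of $\vf$ and to the product formula $\Poi NRt=\Poi NSt\cdot\Poi SRt$ for \emph{every} finitely generated $S$-module $N$. Combining with the first step yields \eqref{eq:PM-comparison}.

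The main obstacle is thus packaged into Levin's theorem, which supplies the nontrivial passage from the product formula at $N=k$ to the product formula for arbitrary $N$. Once that result is quoted, the remaining work is routine: the rank computation of $\Poi SRt$ from \eqref{eq:tatec} and the identification $\edim R-\edim S=c$ coming from the hypothesis $I\cap\fm^2\subseteq I\fm$, both combined with Theorem~\ref{thm:Pchange}.
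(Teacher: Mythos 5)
Your argument is correct in substance, and it shares the two essential ingredients with the paper's proof---Theorem \ref{thm:Pchange} and Levin's theorem on large homomorphisms---but it enters Levin's theorem from a different door. The paper first proves that $\varphi$ \emph{is} large: the hypothesis $I\cap\fm^2\subseteq\fm I$ makes $I/\fm I\to\fm/\fm^2$ injective, so the connecting map $\delta$ in \eqref{eq:sixterms} vanishes and $\pi_*(\varphi)$ is surjective, whence $\Tor{*}{\varphi}kk$ is surjective because $\Tor*Rkk$ is generated as a $\Gamma$-algebra by a lift of $\pi_*(R)$; it then needs only the single implication of \cite[1.1]{Lev80} (large implies $\Poi NSt\Poi kRt=\Poi NRt\Poi kSt$) and substitutes Theorem \ref{thm:Pchange}. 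You instead deduce the numerical identity $\Poi kRt=\Poi kSt\cdot\Poi SRt$ from Theorem \ref{thm:Pchange}, using $\edim R-\edim S=c$ (from the hypothesis), $\depth R-\depth S=\grade_RS=c-h$ (Lemma \ref{prop:depth}, which applies since quasi-complete intersection implies quasi-Gorenstein by Theorem \ref{thm:hierarchy}(4), together with Lemma \ref{lem:principal}), and $\Poi SRt=(1+t)^c/(1-t^2)^h$ from the minimality of the Tate resolution of Construction \ref{con:tate}; you then invoke the full equivalence in Levin's theorem (formula at $N=k$ $\iff$ large $\iff$ formula for all finite $N$) to bootstrap to arbitrary $N$. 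That equivalence, including the $N=k$ criterion, is indeed part of \cite[1.1]{Lev80} (the passage from the equality at $k$ to largeness is the degeneration of the change-of-rings spectral sequence together with the edge map), so your route works; it trades the paper's direct verification of largeness for a purely numerical one, at the cost of quoting a stronger form of Levin's result and of the extra computation of $\Poi SRt$. One correction: the reference you give, \cite[3.3.5]{Avr98}, is the Nagata/Shamash statement about ideals generated by regular sequences, not Levin's theorem; cite \cite[1.1]{Lev80} instead.
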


  \begin{proof}
As $I$ is quasi-complete intersection, the map $\pi_n(\varphi)\colon\pi_n(R)\to
\pi_n(S)$ is surjective for $n\ne2$, by Theorem \ref{thm:comparisonBetti}.  The 
hypothesis $I\cap\fm^2\subseteq\fm I$ implies that $I/\fm I\to \fm/\fm^2$ is
injective, so the same theorem shows that $\pi_2(\varphi)$ is
surjective as well.  By the definition of $\pi_*(R)$, the image of any $k$-linear
right inverse $\sigma\colon \pi_*(R)\to\Tor*Rkk$ of the natural surjection
$\Tor*Rkk\to\pi_*(R)$ generates $\Tor*Rkk$ as a graded $\Gamma$-algebra
over $k$.  Thus, the surjectivity of $\pi_*(\varphi)$
means that the map of $\Gamma$-algebras $\Tor*{\varphi}kk\colon 
\Tor*Rkk\to\Tor*Skk$ is surjective; that is, $\varphi$ is a large homomorphism.  
A theorem of Levin, \cite[1.1]{Lev80}, then gives
  \[
{\Poi NSt}{\Poi kRt}={\Poi NRt}{\Poi kSt}\,.
   \]

Now replace $\Poi kSt$ with its expression from Theorem
\ref{thm:Pchange}, and simplify.
  \end{proof}

To finish, we compare our results with earlier ones for complete intersections.  

  \begin{remark}
    \label{rem:Snamash}
Let $\bd a$ be a regular sequence and $I=(\bd a)$.  

Formula \eqref{eq:Pchange} then specializes to theorems of Tate \cite[Theorem\, 4]{Ta} and Scheja 
\cite[Satz\, 1]{Sc}, and \eqref{eq:PM-comparison} to one of Nagata \cite[27.3]{Na}.  It suffices to prove 
those theorems for principal ideals, but such a reduction is impossible here; see Remark~\ref{counterex}. 

When $\bd a$ lies in $\fm\Ann_RN$ one has $\edim S=\edim R$, so \eqref{eq:PM-comparison} holds by
a result of Shamash, \cite[\S 3, Corollary (2)]{Sh}; see also \cite[3.3.5(2)]{Avr98}.  
We know of no analog of that result for quasi-complete intersection ideals.
  \end{remark}

In Theorem \ref{PM-comparison} the hypothesis $I\cap\fm^2\subseteq I\fm$ is essential:

  \begin{example}
    \label{nonShamash}
For $R=k[\![x]\!]$, $S=R/(x^2)$, and $N=S$ one has 
  \[
\Poi NSt\cdot\frac{(1-t)^{\edim S}}{(1-t^2)^{\depth S}}=1-t\ne1
=\Poi NRt\cdot\frac{(1-t)^{\edim R}}{(1-t^2)^{\depth R}}\,.
   \]
  \end{example}

\section{Local homomorphisms of local rings}
  \label{S:Local homomorphisms of local rings}

In this section $\vf\colon(R,\fm,k)\to(S,\fn,l)$ denotes a homomorphism of local 
rings, which is \emph{local}, in the sense that it satisfies $\vf(\fm)\subseteq\fn$. 

We define and study quasi-complete intersection local homomorphisms, with an 
emphasis on the transfer of local ring-theoretic properties between $R$ and $S$.

  \medskip

By \cite[1.1]{AFH}, there is a commutative diagram of local homomorphisms
  \begin{equation}
    \label{eq:cohen}
  \begin{gathered}
\xymatrixrowsep{2pc}
\xymatrixcolsep{2.5pc}
\xymatrix{
&R'
\ar@{->>}[dr]^-{\vf'}
  \\
R
\ar@{->}[r]^-{\vf} 
\ar@{>->}[ur]^-{\dot\vf}
&{S}
\ar@{->}[r]^-{\sigma{\hphantom{}}}
&{\wh{S}}
 }
 \end{gathered}
  \end{equation}
where $\dot\vf$ is flat, $\vf'$ is surjective, $\sigma$ is the $\fn S$-adic completion map, 
$R'$ is complete, and $R'/\fm R'$ is regular.  Any such a diagram is called 
a \emph{Cohen factorization} of $\vf$.

  \begin{chunk}
    \label{ch:cohen}
We say that $\vf\col R\to S$ is \emph{quasi-complete intersection} (or 
\emph{q.c.i.}) \emph{at} $\fn$ if in some Cohen factorization of $\vf$ the ideal 
$\Ker(\vf')$ is quasi-complete intersection.  We first show that this 
property does not depend on the choice of Cohen factorization:
    \end{chunk}

 \begin{lemma}
      \label{lem:independence}
The homomorphism $\vf$ is q.c.i.\ at $\fn$ (if and) only if 
$\Ker\vf''$ is quasi-complete intersection for every 
Cohen factorization $R\xra{\ddot\vf}R''\xra{\vf''}\wh S$
of $\vf$.
  \end{lemma}

 \begin{proof}  
By \cite[1.2]{AFH}, there is a commutative diagram of local homomorphisms
   \[
\xymatrixrowsep{2pc}
\xymatrixcolsep{2.5pc}
\xymatrix{
&
R'
\ar@{->>}[dr]^{\vf'}
  \\
R
\ar@{>->}[r] 
\ar@{>->}[ur]^{\dot\vf}
\ar@{>->}[dr]_{\ddot\vf}
&{\widetilde R}
\ar@{->>}[r]
\ar@{->>}[u]
\ar@{->>}[d]
&
{\wh S}
  \\
&R''
\ar@{->>}[ur]_{\vf''}
 }
  \]
where the middle row is a Cohen factorization of $\vf$, and the vertical 
arrows are surjections with kernels generated by regular sequences.  
Lemma \ref{lem:step} shows that $\Ker(\vf'')$ is quasi-complete intersection 
if and only if $\Ker(\vf')$ is.
 \end{proof}  

 \begin{remark}
      \label{rem:independence}
When $\vf$ is surjective, it is q.c.i.\ at $\fn$ if and only the ideal $I=\Ker(\vf)$ is quasi-complete intersection.

Indeed, $R\to\wh R\xra{\wh\vf}\wh S$ clearly is a Cohen factorization, so by the preceding 
lemma it suffices to show that $I$ is quasi-complete intersection if and only if $\Ker(\wh\vf)$ is one.   
In view of the equality $\Ker(\wh\vf)=I\wh R$, this follows from Lemma \ref{lem:qciflat}.
  \end{remark}

The homomorphism $\vf$ is said to be \emph{complete intersection} (or \emph{c.i.}), respectively, 
\emph{quasi-Gorenstein} at $\fn$ if in some Cohen factorization the ideal $\Ker(\vf')$ has the property 
described in \ref{ex:ci}, respectively, in \ref{ch:gorRing}.  As above, this notion does not depend on the 
choice of factorization; see \cite[(3.3)]{Avr99} and \cite[4.3]{AF:qGor}, respectively.

 \begin{proposition}
  \label{prop:hierarchy}
The homomorphism $\vf$ is c.i.\ at $\fn$ if and only if it is q.c.i.\ at $\fn$ and  
$S$ has finite flat dimension over $R$.

If $\vf$ is q.c.i.\ at $\fn$, then it is quasi-Gorenstein at $\fn$.
  \end{proposition}

\begin{proof}  
Choose any Cohen factorization \eqref{eq:cohen} and set $I=\Ker(\vf')$.

By \cite[3.3]{AFH}, $S$ has finite flat dimension over $R$ if and only if $I$ 
has finite projective dimension over $R'$, so the first assertion follows from Theorem \ref{thm:hierarchy}(1).  
 
If $\vf$ is q.c.i.\ at $\fn$, then the ideal $I$ is quasi-complete intersection by 
Lemma \ref{lem:independence}, and hence it is quasi-Gorenstein by Theorem \ref{thm:hierarchy}(4).  
 \end{proof}

Next we relate certain local properties of $R$ and $S$.  If $\vf$ is 
quasi-Gorenstein at $\fn$, then the rings $R$ and $S$ are 
simultaneously Gorenstein by \cite[7.7.2]{AF:qGor}.  In view of \ref{hierarchy},
the conclusion holds when $\vf$ is q.c.i.\ at $\fn$; for surjective $\vf$ 
this is already noted in \cite[Cor.\,5]{GS}, along with the fact that $S$ 
is Cohen-Macaulay when $R$ is.

We want to compare numerical measures of the singularity of local rings. 

The \emph{Cohen-Macaulay defect} of $S$ is the non-negative number
  \begin{equation*}
\cmd S=\dim S-\depth S\,;
  \end{equation*}
it is equal to zero if and only if $S$ is Cohen-Macaulay.  

Similarly, the 
\emph{complete intersection defect} of $S$ is the non-negative number
  \begin{equation*}
\cid S =\varepsilon_2(S)-\varepsilon_1(S)+\dim S\,;
 \end{equation*}
it is equal to zero if and only if $S$ is complete intersection; see \cite[2.3.3(b)]{BH}.

 \begin{proposition} 
    \label{prop:ascent/descent}
When $\vf$ is q.c.i.\ at $\fn$ the following inequalities are satisfied:
  \[
\cmd S\le\cmd R
\quad\text{and}\quad \cid S\le\cid R\,.
  \]
Equalities hold if $\grade_RS=\dim R-\dim S$; in particular, if $R$ is Cohen-Macaulay. 
 \end{proposition}

\begin{Remark}
Conjecture~\ref{con:1} predicts that equalities always hold: See \eqref{eq:ascent/descent1}
and \eqref{eq:ascent/descent2}.
  \end{Remark}

\begin{proof}  
Let $R\xra{\dot\vf}R'\xra{\vf'}\wh{S}$ be a Cohen factorization.
The functions $\cmd$ and $\cid$ are additive on flat extensions by
\cite[1.2.6, A.11]{BH} and \cite[3.6]{Avr77}, respectively.  As they 
vanish on the regular rings $R'/\fm R'$ and $\wh{S}/\fn\wh{S}$, 
we see that $R$ and $R'$ are Cohen-Macaulay simultaneously,
and that we may assume the map $\vf$ is surjective.  

From the definition and formula \eqref{eq:cmd2} we now obtain an equality
  \begin{equation}
    \label{eq:ascent/descent1}
\cmd S=\cmd R-(\dim R-\dim S -\grade_RS)\,.
  \end{equation}

On the other hand, the definition, \eqref{eq:sixterms}, and \eqref{eq:grade1} yield
   \begin{equation}
    \label{eq:ascent/descent2}
\begin{aligned}
\cid S
&=\varepsilon_2(S)-\varepsilon_1(S)+\dim S\\
&=\varepsilon_2(R)-\varepsilon_1(R)+\rank_k(I/\fm I)-\rank_S\HH1E+\dim S\\
&=\cid R-(\dim R-\dim S -\grade_RS)\,. 
   \end{aligned}
  \end{equation}
  
The desired assertions follow because $\dim R\ge\dim S+\grade_RS$ always holds, 
see \cite[p.\, 413 and 1.2.14]{BH}, with equality if $R$ is Cohen-Macaulay, see \cite[2.1.4]{BH}.
 \end{proof}

For surjective maps $\vf$ the next corollary is proved in \cite[Corollary 5]{GS}.

\begin{corollary} 
    \label{cor:ascent/descent}
Assume that $\vf$ is q.c.i.\ at $\fn$.

If the ring $R$ is Cohen-Macaulay, then so is $S$.

The ring $R$ is Gorenstein if and only if so is $S$.
 \end{corollary}

\begin{proof}  
The first assertion is already contained in the theorem.  The second one comes from \cite[7.7.2]{AF:qGor}, 
since $\vf$ is quasi-Gorenstein at $\fn$ by Proposition \ref{prop:hierarchy}.
 \end{proof}

The next result can also be obtained from \cite[1.5]{Avr99} or \ref{andre} below. 

\begin{proposition}
    \label{thm:andre}
Any two conditions below imply the third one$.$
  \begin{enumerate}[\quad\rm(a)]
 \item
The homomorphism $\vf$ is q.c.i.\ at $\fn$.
 \item
The ring $R$ is complete intersection.
  \item
The ring $S$ is complete intersection.
   \end{enumerate}
 \end{proposition}

\begin{proof}  
As in the proof of Proposition \ref{prop:ascent/descent}, we may assume that $\vf$ is surjective. 

When (a) holds, Theorem \ref{thm:comparisonBetti} gives 
$\varepsilon_3(R)=\varepsilon_3(S)$.  Vanishing of $\varepsilon_3$ 
characterizes complete intersections, see \cite[3.5.1(iii)]{GL} or 
\cite[7.3.3]{Avr98}, whence (b)$\iff$(c).

When (b) and (c) hold, and $\varkappa\col Q\to\wh R$ is a surjective homomorphism 
with $Q$ regular local, then both ideals $I=\Ker\varkappa$ and 
$J=\Ker(\wh\vf \varkappa)$ are generated by regular sequences.  By \ref{ex:ci} and 
Lemma \ref{lem:step}, $J\wh R$ is quasi-complete intersection.  Now $J\wh R=I\wh R$,
so $I$ is quasi-complete intersection by Lemma \ref{lem:qciflat}.
 \end{proof}

\section{Homomorphisms of noetherian rings}
  \label{S:Homomorphisms of noetherian rings}

In this section $\vf\colon R\to S$ denotes a homomorphism of noetherian rings.
 
For such homomorphisms we first define the q.c.i.\ property in terms of localizations,
then show show how they can be described in terms of vanishing of Andr\'e-Quillen 
homology.  Various properties of that theory, collected in Appendix~\ref{S:Andre-Quillen 
homology}, are heavily used when studying the stability of this class of maps.

Developments here follow the treatment of l.c.i.\ homomorphisms 
in~\cite[\S 5]{Avr99} and proofs that can be transposed with only 
superficial changes have been omitted.  
  \medskip
  
For $\fq\in\Spec S$ we let $\fq\cap R$ denote the prime ideal $\vf^{-1}(\fq)$ of~$R$.  
As usual, we set $k(\fq)=S_\fq/\fq S_\fq$ and call $k(\fq)\otimes_RS$ the 
\emph{fiber} of $\vf$ at $\fq$.  The induced homomorphism of local rings 
$\vf_\fq\col R_{\fq\cap R}\to S_\fq$ is called the \emph{localization} of $\vf$ at $\fq$.  
  
  \begin{chunk}
We say that the homomorphism $\vf$ is \emph{quasi-complete intersection} 
(or \emph{q.c.i.}) if it is q.c.i.\ at every $\fq\in\Spec S$.  
  \end{chunk}

This notion mimics those of \emph{locally complete intersection} (or \emph{l.c.i}) 
homomorphism in \cite{Avr99}  and of \emph{quasi-Gorenstein} 
homomorphism in \cite{AF:qGor}, defined by the corresponding condition on $\vf_\fq$.  
Proposition \ref{prop:hierarchy} clarifies the relationships:

 \begin{bchunk}{\rm\textbf{Hierarchy.}}
  \label{hierarchy}
The homomorphism $\vf$ is l.c.i.\ if and only if it is q.c.i.\ and the $R$-module $S_\fq$ has 
finite flat dimension for every prime ideal $\fq$ of $S$.

If $\vf$ is q.c.i., then it is quasi-Gorenstein.
 \qed
  \end{bchunk}

The concept fits properly into Grothendieck's theory of flat maps; see \cite[\S 6]{Gr}.

 \begin{bchunk}{\rm\textbf{Flat homomorphisms.}}
  \label{flat}
When $\vf$ is flat it is q.c.i.\ if and only if it is l.c.i., if and only if the ring $S\otimes_Rk(\fq)$ 
is locally complete intersection for every $\fq\in\Supp S$.
  \end{bchunk}

 \begin{proof}
The first equivalence holds because for flat homomorphisms the q.c.i.\ and l.c.i.\ conditions 
coincide, by \ref{hierarchy}.  The second equivalence is \cite[5.2]{Avr99}.
  \end{proof}

\stepcounter{theorem}

Next we describe q.c.i.\ homomorphisms in terms of Andr\'e-Quillen homology, by extending
from the case of surjective maps the characterization given in \cite{BMR}.

\begin{bchunk}{\rm\textbf{Andr\'e-Quillen homology.}}
     \label{prop:aq}
The homomorphism $\vf$ is q.c.i.\ at $\fq\in\Spec S$ if and only if $\AQ nSR{k(\fq)}=0$ 
holds for $n\ge3$.

The homomorphism $\vf$ is q.c.i.\ if and only if $\AQ nSR-=0$ holds for $n\ge3$.
  \end{bchunk}

 \begin{proof}
Let $R_{\fq\cap R}\to R'\to\wh{S_\fq}$ be a Cohen factorisation.  The kernel of $R'\to\wh{S_\fq}$ 
is quasi complete intersection if and only if $\AQ n{\wh{S_\fq}}{R'}{k(\fq)}=0$ for $n\ge3$; see \ref{ch:aq}.  
By \ref{eq:iso2}, this is equivalent to $\AQ nSR{k(\fq)}=0$ for $n\ge3$, whence the first assertion.

The second assertion follows from the first one by \cite[S.29]{AnBook}.
  \end{proof}

The next result can be viewed as a sharper version of Proposition \ref{thm:andre}.
For surjective $\vf$ it is due to Andr\'e \cite[Theorem and Proposition]{AnJPAA}.  

\begin{bchunk}{\rm\textbf{Complete intersections.}}
    \label{andre}
Any two conditions below imply the third one.
  \begin{enumerate}[\quad\rm(a)]
 \item
$\AQ 3SR{k(\fq)}=0$.
 \item
The ring $R_{\fq\cap R}$ is complete intersection.
  \item
The ring $S_\fq$ is complete intersection.
   \end{enumerate}
When these conditions hold the homomorphism $\vf$ is q.c.i.\ at $\fq$.
 \end{bchunk}

\begin{proof}
By \cite[4.5]{Avr99}, $S_\fq$ is complete intersection if and only it $\mathbb{Z}\to S_\fq$ is 
c.i.~at~$\fq S_\fq$.  The exact sequence \eqref{eq:jz} for $\mathbb{Z}\to R_{\fq\cap R}\to S_{\fq}\to k(\fq)$ 
and the criteria for c.i.\ and for q.c.i.\ homomorphisms in \ref{lci} and \ref{prop:aq}, respectively,
yield the assertions.
  \end{proof}

The proof of \cite[5.11]{Avr99} shows that the following result is a consequence of  
\ref{prop:aq} and standard properties of Andr\'e-Quillen homology with respect
to flat base change.

\begin{bchunk}{\rm\textbf{Flat base change.}}
  \label{flatbasechange}
Let $R'$ be a noetherian ring, $\rho\col R\to R'$ a homomorphism of rings such that 
$S\otimes_RR'$ is noetherian, and set $\vf'=\vf\otimes_RR'\col R'\to S\otimes_RR'$.
\begin{enumerate}[\rm(1)]
\item
If $\vf$ is q.c.i.\ and $\Tor nR{S}{R'}=0$ holds for all $n\ge1$, then $\vf'$ is q.c.i.
\item
If $\vf'$ is q.c.i.\ and $\rho$ is faithfully flat, then $\vf$ is q.c.i.
    \qed
  \end{enumerate}
   \end{bchunk}

The next three items involve also a noetherian ring $Q$ and a homomorphism of rings 
$\psi\col Q\to R$.  For those assertions that come in two versions it clearly suffices to 
prove the statement  that includes the text in parentheses.
 
\begin{bchunk}{\rm\textbf{Composition.}}
  \label{composition}
If $\vf$ is q.c.i.\ (at some $\fq\in\Spec S$) and $\psi$ is q.c.i.\ (at $\fq\cap R$), then 
the composed homomorphism $\vf\psi$ is q.c.i.\ (at $\fq$).
  \end{bchunk}

 \begin{proof}
The exact sequence \eqref{eq:jz} for $Q\to R\to S\to k(\fq)$ yields $\AQ{n}SQ{k(\fq)}=0$ 
for $n\ge3$, due to \ref{prop:aq}.  By the same token, $\vf\psi$ is q.c.i.\ at $\fq$.
  \end{proof}
   
\begin{bchunk}{\rm\textbf{Decomposition.}}
  \label{decomposition}
Assume that $\vf\psi$ is q.c.i.\ (at some $\fq\in\Spec S$).
\begin{enumerate}[\rm(1)]
\item
If $\psi$ is l.c.i.\ (at $\fq\cap R$), then $\vf$ is q.c.i.\ (at $\fq$).
\item
If $\vf$ is q.c.i.\ (at $\fq$), then $\psi$ is q.c.i.\ (at $\fq\cap R$).
\item
If $\fd_RS_\fq$ is finite for some $\fq\in\Spec S$, then $\vf$ is c.i.\ at $\fq$
and $\psi$ is q.c.i.\ at~$\fq\cap R$.
\end{enumerate}
\end{bchunk}

 \begin{proof}
(1) Set $l=k(\fq)$.  The exact sequence \eqref{eq:jz} for $Q\to R\to S\to l$ yields $\AQ{n}SRl=0$ 
for $n\ge3$ by \ref{prop:aq} and \ref{lci}, so $\vf$ is q.c.i.\ at $\fq$ by \ref{prop:aq}.

(2) The same exact sequence as in (1) here yields $\AQ{n}RQ{k(\fq\cap R)}=0$ for 
$n\ge3$ by \ref{prop:aq}, and the latter also shows that $\psi$ is q.c.i.\ at $\fq\cap R$.

(3) Since $\AQ{n}SQl\to\AQ{n}SRl$ is surjective for $n=4$ if $\cha l\ne2$ and for 
$n=3$ if $\cha l=2$, see \ref{thm:eth}, we get $\AQ{4}SRl=0$ if $\cha l\ne2$ and 
$\AQ{3}SRl=0$ if $\cha l=2$ from \ref{prop:aq}. Now $\vf$ is c.i.\ at~$\fq$ by \ref{lci}, 
so $\psi$ is q.c.i.\ at $\fq\cap R$ by~(2). 
  \end{proof}

\begin{bchunk}{\rm\textbf{Flat descent.}}
  \label{descent}
When $\vf$ is faithfully flat the composed homomorphism $\vf\psi$ is q.c.i.\ if and only if 
$\vf$ is l.c.i.\ and $\psi$ is q.c.i.
\end{bchunk}

 \begin{proof}
The ``if'' part comes from \ref{composition}.  The converse follows from \ref{decomposition}(3), 
because faithfully flat homomorphisms induce surjections on spectra.
   \end{proof}

By \ref{flat}, the following result applies to homomorphisms essentially of finite type.

\begin{bchunk}{\rm\textbf{Factorizable homomorphisms.}}
  \label{factorization}
Assume $\vf=\vf'\dot\vf$, where $\dot\vf$ and $\vf'$ are homomorphism of rings
such that $\dot\vf$ is l.c.i.\ and $\vf'$ is surjective.

The homomorphism $\vf$ is q.c.i.\ if and only if $\Ker(\vf')_{\fm'}$ is a quasi-complete 
intersection ideal of $R'_{\fm'}$ for every maximal ideal $\fm'$ of $R'$ containing $\Ker(\vf')$.
   \end{bchunk}

  \begin{proof}
From \ref{composition} and \ref{decomposition}(1) we see that $\vf$ is q.c.i.\ if and only if $\vf'$ is.  
By Remark \ref{rem:independence} the latter holds if and only if the ideal $\Ker(\vf')_{\fp'}$ ot $R'_{\fp'}$ 
is quasi-complete intersection for every prime ideal $\fp'$ of $R'$ that contains $\Ker(\vf')$.  
Lemma \ref{lem:qciflat} implies that this can be verified by checking only those $\fp'$ that are maximal.
  \end{proof}

For homomorphisms covered by \ref{factorization} the q.c.i.\ property \emph{localizes},
in the sense that if it holds at the maximal ideals of $S$, then it does at all prime ideals. 
In general, localization may fail. Indeed, recall that the non-zero fiber rings of the completion 
maps $R\to\wh{R_\fm}$, when $\fm$ ranges over the maximal ideals of $R$, are called 
the \emph{formal fibers} of the ring $R$.    Ferrand and Raynaud \cite[3.2(i)]{FR} 
give a local domain $R$ with $\wh R\otimes k(0)$ not Gorenstein; thus, 
$R\to\wh R$ is q.c.i.\ at $\fm$ but not q.c.i., see~\ref{flat}.

In our second result concerning localization the hypotheses are placed on $R$, rather than on~$\vf$.  
It applies, for instance, to excellent rings (as their formal fibers are regular) and to homomorphic 
images of l.c.i.\ rings (see \cite[Main Theorem (b)]{AF:Gro}).

\begin{bchunk}{\rm\textbf{Localization.}}
  \label{localization}
Assume that $R_{\fq\cap R}$ has l.c.i.\ formal fibers for each $\fq$ in $\Spec S$.

\hskip-1mm When $\vf$ is q.c.i.\,at the maximal ideals of $S$,\,it is q.c.i.\,and $S$\! has 
l.c.i.\,formal fibers{.}
  \end{bchunk}

  \begin{proof}
Choose $\fq\in\Spec S$, a maximal ideal $\fn$ of $S$ with $\fn\supseteq\fq$, 
and set $\fm=\fn\cap R$.  Set $R^*=\wh{R_\fm}$ and $S^*=\wh{S_\fn}$, and let 
$R^*\to R'\to S^*$ be a Cohen factorization of $\vf^*=\wh{\vf_{\fn}}\col R^*\to S^*$.  
Choose $\fq^*\in\Spec S^*$ with  $\fq^*\cap S=\fq$, and set $\fp'=\fq^*\cap R'$.  

The map $R^*\to R'$ is c.i.\ at $\fp'$ as it is flat, $R^*$ and $R'$ are complete, 
and $R'/\fm R'$ is regular; see \cite[\S3, Step~1]{AF:Gro}.  By \ref{lci} this gives
$\AQ n{R'}{R^*}{k(\fp')}=0$ for $n\ge2$.  

In addition, $R_\fm\to R'\to S^*$ is a Cohen factorization of  $\vf_\fn$, so $\vf^*$ is q.c.i.\ 
at $\fn S^*$ by Lemma \ref{lem:independence},  hence $\AQ n{S^*}{R'}{k(\fq^*)}=0$ for 
$n\ge3$ by \ref{prop:aq}.  The exact sequence \eqref{eq:jz} for $R^*\to R'\to S^*\to k(\fq^*)$ 
now yields $\AQ n{S^*}{R^*}{k(\fq^*)}=0$ for $n\ge3$.

As $R_\fm$ has l.c.i.\ formal fibers, \ref{lci} gives $\AQ n{R^*}{R}{k(\fq^*)}$ 
for $n\ge2$.  The sequence \eqref{eq:jz} defined by
$R\to R^*\to S^*\to k(\fq^*)$ gives $\AQ n{S^*}{R}{k(\fq^*)}=0$ 
for $n\ge3$, so the composed map $R\to S\to S^*$ is q.c.i.\ at  $\fq^*$ 
by \ref{prop:aq}. Since $S\to S^*$ is flat, \ref{decomposition}(3) shows 
that it is c.i.\ at $\fq^*$ and $R\to S$ is q.c.i.\ at~$\fq$.  
 
We obtain the desired assertions by varying the choices of $\fq$, $\fn$, and $\fq^*$.
  \end{proof}

The proof of \cite[6.11]{AF:Gor} shows that \ref{localization} and \ref{flatbasechange} 
imply the next property.

\begin{bchunk}{\rm\textbf{Completion.}}
Assume that $R_{\fq\cap R}$ has l.c.i.\ formal fibers for each $\fq\in\Spec S$.

Let ${I} \subseteq R$ and ${J} \subsetneq S$ be ideals, such that
$\vf({I})\subseteq{J}$, and let $\vf^*\col R^*\to S^*$ be the induced
map of the corresponding ideal-adic completions. 

\begin{enumerate}[\rm(1)]
\item
If $\vf$ is q.c.i., then so is $\vf^*$.
\item
If $I$ is contained in the Jacobson radical of $R$ and $\vf^*$ is q.c.i., then so is $\vf$.
  \qed
  \end{enumerate}
\end{bchunk}

\appendix

\section{Andr\'e-Quillen homology}
  \label{S:Andre-Quillen homology}

For each $A$-algebra $B$ and every $B$-module $N$, Andr\'e \cite{AnBook} and Quillen \cite{Qu} 
constructed homology groups $\AQ nBAN$ with remarkable functorial properties.  A few results 
crucial for this paper, taken from \cite{AnBook} and \cite{Avr99}, are collected below. 

Let $A\xra{\alpha}B\xra{\beta}C\xra{\gamma}l$ be homomorphisms of noetherian rings,
where $l$ is a field.

  \begin{chunk}
    \label{jz}
For $k=k(\Ker\gamma)$ and each $n\ge0$ there is a natural exact sequence 
of $l$-modules
  \begin{equation}
    \label{eq:jz}
\AQ{n}BAk\otimes_kl\lra\AQ{n}CAl\lra\AQ {n}CBl\xra{\eth_{n}}\AQ{n-1}BAk\otimes_kl\,.
  \end{equation}

Indeed, $\alpha$ and $\beta$ define a Jacobi-Zariski exact sequence with coefficients in $l$, see \cite[5.1]{AnBook}, 
which differs from \ref{eq:jz} only because in it $\AQ{n}BAl$ appears in place of $\AQ{n}BAk\otimes_kl$.  However, 
these modules are isomorphic by \cite[4.58]{AnBook}.
   \end{chunk}

For $\fq\in\Spec C$ let $\wh{C_\fq}$ denote the $\fq C_\fq$-adic completion of $C_\fq$ 
and set $\fp=\fq\cap B$.

    \begin{chunk}
     \label{lem:aq-local}
By \cite[4.58 and 5.27]{AnBook} there are natural isomorphisms
  \begin{equation}
    \label{eq:iso1}
\AQ nCB{k(\fq)}
\cong\AQ n{C_\fq}B{k(\fq)}
\cong\AQ n{C_\fq}{B_{\fp}}{k(\fq)}
  \quad\text{for}\quad n\ge0
  \end{equation}
If $B_{\fp}\to B'\to\wh{C_\fq}$ is a Cohen factorization of $\beta_\fq$,  then
\eqref{eq:iso1} and \cite[1.7]{Avr77} give
  \begin{equation}
    \label{eq:iso2}
\AQ nCB{k(\fq)}\cong
\AQ n{\wh{C_\fq}}{B'}{k(\fq)} 
  \quad\text{for}\quad n\ge2\,.
  \end{equation}
    \end{chunk}

  \begin{chunk}
    \label{thm:eth}
Set $l=k(\fq)$ and let $\gamma\col C\to l$ be the canonical surjection.

If $\fd_BC_\fq$ is finite, then in \eqref{eq:jz} one has $\eth_n=0$ in the following cases:  
\begin{enumerate}[\quad\rm(a)]
\item
$n=2i$ for some integer $i$ with $1\le i<\infty$ and $\cha l=0$.
\item
$n=2i$ for some integer $i$ with $1\le i\le\cha l$.
\item
$n=3$ and $\cha l=2$.
  \end{enumerate}
  
Indeed, in view of \eqref{eq:iso1} we may assume $B=B_\fp$, $C=C_\fq$, and the homomorphisms 
$\alpha$ and $\beta$ are local.  Cases (a) and (b) then are settled by \cite[4.7]{Avr99}.  
When $\cha l=2$ the map $\pi_n(\alpha)\col\pi_n(A)\otimes_kl\to\pi_n(B)$ is injective 
for {all} integers $n\ge2$, not just for the even ones, see~\cite{Avr82}, so case (c) follows from the 
\emph{proof} of \cite[4.7]{Avr99}
 \end{chunk}
 
   \begin{chunk}
     \label{lci}
For $\fq\in\Spec C$  the following conditions are equivalent.
\begin{enumerate}[\quad\rm(i)]
\item
$\beta$ is c.i.\ at $\fq$.
\item
$\AQ {\ges 2}CB{k(\fq)}=0$.
\item
$\AQ 2CB{k(\fq)}=0$.
\item
$\AQ {\ges q}CB{k(\fq)}=0$ for some some integer $q$, and $\fd_BC_\fq$ is finite.
\item
$\AQ nCB{k(\fq)}=0$ for some integer $n$ with $3\le n\le 2m-1$, where $m$ is an integer 
such that $(m-1)!$ is invertible in $k(\fq)$, and $\fd_BC_\fq$ is finite.
\end{enumerate}

This is a consequence of \cite[1.2, 1.3, and 1.4]{Avr99}, via \eqref{eq:iso1}.
 \end{chunk}

We finish with two open questions suggested by results in the main text.
 
  \begin{chunk} 
     \label{open}
The equivalence of conditions (i) and (ii) in \ref{lci} has a parallel in \ref{prop:aq}.  
It is natural to ask whether analogs of other conditions hold:
   \begin{enumerate}[\quad\rm(i)]
\item[\rm(iii)]
Does $\AQ 3CB{k(\fq)}=0$ imply that $\beta$ is q.c.i.\ at $\fq$?  
\item[\rm(iv)]
Does $\AQ {\ges q}CB{k(\fq)}=0$ for some some integer $q$ imply that $\beta$ is q.c.i.~at~$\fq$? 
   \end{enumerate}

Special cases of (iii) are covered by \ref{andre}.  An affirmative answer to (iv) was conjectured 
by Quillen \cite[5.6]{Qu} when $\beta$ is of finite type and in \cite[p.\,459]{Avr99} in general. 
That conjecture was proved in \cite[1.3]{Avr99} in case $C_\fq$ has finite flat dimension over 
$B$, and in \cite[Theorem\,1]{AI0} in case $\beta$ admits a right inverse ring homomorphism.
  \end{chunk}
  
\section*{Acknowledgement}

We thank Javier Majadas Soto for pointing us to his work on the subject, 
and Ray Heitmann, Craig Huneke, and Srikanth Iyengar for useful conversations.

  \end{document}